\newtheorem{thm}{Theorem}[section]
\newtheorem{cor}{Corollary}[section]
\newtheorem{rem}{Remark}[section]
\newcommand{\Z}{\mathbb{Z}}
\newcommand{\ds}{\operatorname{ds}}
\def\ds{\displaystyle}
\begin{document}
\title[On Quadratic Forms and ECS]{Quadratic Forms, Exact Covering Systems, and Product Identities for Theta Functions}
\author{Zhu ~Cao}
\address{Department of Mathematics, Kennesaw State University, Marietta, GA 30060,  USA}
\email{zcao@kennesaw.edu}

\begin{abstract}
In this paper, we establish a connection between integral quadratic forms and exact covering systems (ECS) and present a structural framework for a class of product identities involving Ramanujan's theta functions. This approach yields infinitely many such identities. As applications, we provide a unified interpretation for twenty-two of Ramanujan's forty identities for the Rogers-Ramanujan functions. Many identities analogous to the forty identities can be naturally explained from this perspective. In addition, we discuss ternary quadratic forms and derive new identities involving products of three or more theta functions. We conclude by unifying several previous approaches and providing a summary.
\end{abstract}
\maketitle
\section {Introduction}
In his one-page communication \cite{RAM40NOTE}, Ramanujan stated, ``I have now found an algebraic relation between $G(q)$ and $H(q)$, viz.:
\begin{equation*}
G^{11}(q)H(q) - q^2G(q)H^{11}(q) = 1 + 11q G^6(q)H^6(q).
\end{equation*}
Another noteworthy formula is
\begin{align} \label{40ID4}
G(q^{11})H(q) - q^2G(q)H(q^{11}) = 1.
\end{align}
Each of these formulas is the simplest of a large class.''

Here, $G(q)$ and $H(q)$ denote the Rogers--Ramanujan functions, defined as
\[
G(q) := \sum_{n=0}^{\infty} \frac{q^{n^2}}{(q; q)_n}, \quad
H(q) := \sum_{n=0}^{\infty} \frac{q^{n(n+1)}}{(q; q)_n}
\]
for $\lvert q \rvert < 1$.

In this paper, we present a universal structure that accounts for a collection of such identities, including a natural proof of \eqref{40ID4}.

The remainder of this paper is organized as follows:

{Section 2} introduces the necessary notation and preliminary results that will be used in the proofs of the results presented in later sections.

{Section 3} presents, from the perspective of integral quadratic forms and exact covering systems (ECS) associated with integer matrices, proofs or sketches of proofs for 22 of Ramanujan’s 40 identities for the Rogers--Ramanujan functions. To prove a given identity, it is often necessary to identify a suitable quadratic form and expand it using two ECSs of \( \mathbb{Z}^n \), each corresponding to one side of the identity. This method, to our knowledge, has not appeared previously in the literature. The idea is illustrated in detail in the proof of \eqref{40ID4}, which is regarded as the most important proof in this paper.

It is important to emphasize that the purpose of this paper is not to provide complete proofs of all forty identities. Rather, we aim to showcase the power and elegance of this structural framework, while avoiding overly artificial manipulations whenever possible. The 22 identities presented here do not constitute the full set that can be proved using our approach. Some of the more intricate identities---whose proofs are omitted---require additional identities beyond the foundational results introduced in this paper.

In {Section 4}, since the approach introduced in this paper can be systematically employed to establish infinitely many new identities, we focus on ternary quadratic forms and present some new identities for the products of three theta functions as applications.  

{Section 5} continues with new identities and emphasizes that many identities analogous to Ramanujan’s forty identities can be naturally interpreted through our framework. We also provide a brief review of prior approaches, including Schröter’s formula and its generalizations, Watson’s method, Rogers’ method, Bressoud’s generalization of Rogers’ method, and Yesilyurt’s generalization of Bressoud’s method. Notably, the elementary techniques used in these previous works align with the matrix ECS structure developed in this paper.

\section {Notation and Preliminary Results}
We use the standard notation for $q$-products throughout this paper, defining
\begin{align*} 
(a)_0 &:= (a;q)_{0} := 1, \\
(a)_n &:= (a;q)_{n} := \prod_{k=0}^{n-1} (1 - aq^{k}), \quad n \geq 1,\\
(a)_\infty &:= (a;q)_{\infty} := \prod_{k=0}^{\infty} (1 - aq^{k}), \quad |q| < 1.
\end{align*}
Jacobi's triple product identity is given by \cite[p.~10]{spirit}
\begin{align} \label{JTI}
\sum_{n= -\infty}^{\infty} q^{n^{2}} z^{n} = (-qz;q^{2})_{\infty}
(-q/z;q^{2})_{\infty} (q^{2}; q^{2})_{\infty}, \qquad |q| < 1.
\end{align}
Ramanujan's general theta function is defined as
\begin{align} \label{theta}
f(a,b) := \sum^\infty_{n=-\infty} a^{n(n+1)/2} b^{n(n-1)/2}, \quad |ab| < 1.
\end{align}
By Jacobi's triple product identity, we have
\[
f(a,b) = (-a; ab)_\infty (-b; ab)_\infty (ab; ab)_\infty.
\]
It is easy to verify the following properties:
\begin{align}
f(a, b) &= f(b, a), \\
f(1, a) &= 2f(a, a^3), \\
\label{f-1a}
f(-1, a) &= 0,
\end{align}
and for any integer \( n \),
\begin{align} \label{aa}
f(a, b) = a^{n(n+1)/2} b^{n(n-1)/2} f(a(ab)^n, b(ab)^{-n}).
\end{align}
The following special cases of \( f(a, b) \) are frequently used in this paper:
\begin{align}
\varphi(q) &:= \sum_{n=-\infty}^{\infty} q^{n^2} = f(q,q) = (-q; q^2)^2_{\infty} (q^2; q^2)_{\infty}, \\
\psi(q) &:= \sum_{n=0}^{\infty} q^{n(n+1)/2} = f(q, q^3) = \frac{(q^2; q^2)_{\infty}}{(q; q^2)_{\infty}}, \\
f(-q) &:= f(-q, -q^2) = (q; q)_{\infty}.
\end{align}
We define \( \chi(q) := (-q; q^2)_{\infty} \), which is not a theta function but is introduced for notational convenience. In most of the forty identities for the Rogers–Ramanujan functions discussed in this paper, quotients of \( \chi \)-functions appear on the right-hand sides.

We will also use the following quotient form of the well-known quintuple product identity, as given in \cite{spirit}:
\begin{align} \label{QPI}
\frac{f(-x^2, -\lambda x)f(-\lambda x^3)}{f(-x, -\lambda x^2)} 
= f(-\lambda^2 x^3, -\lambda x^6) + x f(-\lambda, -\lambda^2 x^9).
\end{align}

In \cite{CAOECS}, the author proved a generalized form of the quintuple product identity by showing that it is a special case of the main theorem. For a detailed history and various proofs of the quintuple product identity, readers may refer to the survey by S. Cooper \cite{cooper}.

Let \( P(q) \) denote any power series in \( q \). Then the
\( t \)-dissection of \( P \) is given by
\begin{equation*}
P(q) := \sum_{k=0}^{t-1} q^k P_k(q^t),
\end{equation*}
where the \( P_k \) are the components of the dissection.

For a positive integer \( k \), corresponding to the exact covering system 
\[
\left\{ r \pmod{k} \right\}_{r=0}^{k-1},
\]
a theta function can be expressed as a linear combination of \( k \) theta functions:
\begin{align}
f(a,b) &= \sum_{n=-\infty}^{\infty} a^{n(n+1)/2} b^{n(n-1)/2} \notag \\
&= \sum_{r=0}^{k-1} \sum_{n=-\infty}^{\infty} a^{(kn+r)(kn+r+1)/2} b^{(kn+r)(kn+r-1)/2} \notag \\
\label{DISECT}
&= \sum_{r=0}^{k-1} a^{r(r+1)/2} b^{r(r-1)/2} f\big( a^{k(k+1)/2 + kr} b^{k(k-1)/2 + kr},\, a^{k(k-1)/2 - kr} b^{k(k+1)/2 - kr} \big).
\end{align}

Setting \( k = 2 \) in \eqref{DISECT}, we obtain
\begin{align} \label{k=2}
f(a,b) = f(a^3 b, a b^3) + a f(b/a, a^5 b^3).
\end{align}

The case \( k = 3 \) yields the 3-dissection of \( f(a, b) \):
\begin{align} \label{3dis}
f(a,b) = f(a^6 b^3, a^3 b^6) + a f(b^3, a^9 b^6) + b f(a^3, a^6 b^9),
\end{align}
which will be used in this paper.

The notion of covering system was first introduced by Paul Erd\"{o}s in the early 1930s. A covering system, also known as a complete residue system, is a collection of residue classes $a_i\pmod {n_i}$ with $1\leq i \leq k$ such that every integer belongs to least one of the residue classes. A covering system in which each integer is covered by exactly one congruence is called an exact covering system (ECS). In other words, an exact covering system is a partition of the integers into a finite set of arithmetic sequences.

In this paper, we focus on integer matrix exact covering systems (ECS) of $\Z^n$,  which are partitions of $\Z^n$ into a lattice and a finite set of its translates. For an $n\times n$ invertible integer matrix $B$, $B\Z^n \lhd \Z^n$ under the binary operation as vector addition in the abelian additive group $\Z^n$. If \( k = |\det B| \), then the quotient group \( \mathbb{Z}^n / B \mathbb{Z}^n \) consists of \( k \) cosets. Let \( \{R_1, \dots, R_k \}\) be a complete set of representatives for these cosets, then we have
\[
\mathbb{Z}^n = \bigcup_{i=1}^k (B \mathbb{Z}^n + R_i).
\]
In \cite{CAOHURECS}, the authors discussed a special type of matrix ECS with well-structured coset representatives, given by
\begin{equation} \label{ECS001}
\{ B\mathbb{Z}^n + i e_j \}, \quad i = 0, 1, \dots, k-1 \quad \left(\text{or } -\left\lfloor \frac{k}{2} \right\rfloor + 1 \leq i \leq \left\lfloor \frac{k}{2} \right\rfloor \right),
\end{equation}
where \( e_j = (0, \dots, 0, 1, 0, \dots, 0)^\mathrm{T} \) is the \( j \)-th column of the identity matrix \( I_n \). An ECS of the form \eqref{ECS001} is called a {simple ECS}, and the matrix \( B \) in \eqref{ECS001} is defined as a {simple covering matrix}. The authors provided the following criterion to identify a simple ECS.
\begin{thm} \label{SIMECS}
A nonsingular integer matrix \( B = (b_{ij})_{n \times n} \) is a simple covering matrix if and only if there exists \( 1 \le j \le n \) such that the entries of \( \mathbf{b}_j^* \), the \( j \)-th column of the adjugate matrix \( B^* \), are coprime. In this case, letting \( k = |\det B| \), the set \( \ds \bigcup_{i = 0}^{k-1} \{ B\mathbb{Z}^n + i e_j \} \) forms a simple ECS of \( \mathbb{Z}^n \).
\end{thm}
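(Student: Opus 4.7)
The plan is to reduce the statement to a computation of the order of $e_j$ in the quotient group $\mathbb{Z}^n/B\mathbb{Z}^n$. Since $|\det B| = k$ is the cardinality of this quotient, the collection $\{B\mathbb{Z}^n + ie_j\}_{i=0}^{k-1}$ partitions $\mathbb{Z}^n$ if and only if the cosets $ie_j + B\mathbb{Z}^n$ for $0 \le i \le k-1$ are pairwise distinct, and this in turn is equivalent to the element $e_j + B\mathbb{Z}^n$ having order exactly $k$ in $\mathbb{Z}^n/B\mathbb{Z}^n$. So I would first state this elementary group-theoretic reformulation, turning the combinatorial problem about covering systems into an order computation.

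Next I would compute the order explicitly using the adjugate. The defining identity $BB^* = (\det B) I_n$ gives $B \mathbf{b}_j^* = (\det B)\, e_j$, or equivalently $B^{-1}(\det B\, e_j) = \mathbf{b}_j^*$. Thus an integer $m$ satisfies $me_j \in B\mathbb{Z}^n$ exactly when $B^{-1}(me_j) = \frac{m}{\det B}\,\mathbf{b}_j^*$ lies in $\mathbb{Z}^n$, i.e.\ $\det B$ divides $m\, b_{ij}^*$ for every $i$. Writing $d = \gcd(b_{1j}^*,\ldots,b_{nj}^*)$, this is equivalent to $\det B \mid md$, so the smallest positive such $m$ is
\[
\operatorname{ord}(e_j + B\mathbb{Z}^n) \;=\; \frac{|\det B|}{\gcd(|\det B|,\, d)}.
\]
Since the $j$-th diagonal entry of $B B^* = (\det B) I_n$ gives $\det B = \sum_i b_{ji} b_{ij}^*$, we see that $d \mid \det B$, so $\gcd(|\det B|, d) = d$ and the order simplifies to $|\det B|/d$.

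Finally, I would conclude: this order equals $k = |\det B|$ if and only if $d = 1$, i.e.\ the entries of the $j$-th column $\mathbf{b}_j^*$ of the adjugate are coprime. Taking the disjunction over $j$ yields the stated iff characterization, and when the condition holds for a particular $j$ the same argument shows that $\bigcup_{i=0}^{k-1}(B\mathbb{Z}^n + ie_j)$ is an ECS of $\mathbb{Z}^n$, which is precisely the simple ECS described in \eqref{ECS001}.

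The only genuinely delicate step is the passage from the componentwise divisibility conditions $\det B \mid mb_{ij}^*$ to the single condition $\det B \mid md$; this uses that if $\det B$ divides each $mb_{ij}^*$ then it divides their gcd $md$, and conversely. Once this is pinned down, together with the observation $d \mid \det B$ coming directly from $BB^* = (\det B)I_n$, the rest is bookkeeping. I expect no deeper obstacle than carefully handling these gcd manipulations.
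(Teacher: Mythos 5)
Your argument is correct and complete. Note that the paper itself does not prove Theorem \ref{SIMECS}; it imports the statement from \cite{CAOHURECS} without proof, so there is no in-paper argument to compare against. Your reduction to the order of $e_j + B\mathbb{Z}^n$ in the quotient group $\mathbb{Z}^n/B\mathbb{Z}^n$ is exactly the right mechanism: since the quotient has $k=|\det B|$ elements, the $k$ listed cosets are pairwise distinct (equivalently, form a partition) precisely when that order equals $k$. The computation $B^{-1}(me_j)=\frac{m}{\det B}\mathbf{b}_j^*$, the passage from $\det B \mid m b_{ij}^*$ for all $i$ to $\det B \mid md$ with $d=\gcd_i(b_{ij}^*)$, and the observation $d \mid \det B$ via the cofactor expansion $\det B=\sum_i b_{ji}b_{ij}^*$ are all sound, yielding $\operatorname{ord}(e_j+B\mathbb{Z}^n)=|\det B|/d$ and hence the stated equivalence with $d=1$. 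This is a clean, self-contained proof of the cited criterion.
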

In \cite{CAOECS}, the author first introduced the application of integer matrix ECS to product identities involving Ramanujan's theta functions.

Let \( m_i \in \mathbb{Z}^{+} \) and suppose \( a_i b_i = q^{2m_i} \) for \( i = 1, 2, \dots, n \). We consider the product of \( n \) theta functions:
\begin{align*}
S &= \prod_{i=1}^n f(a_i, b_i) 
= \sum_{x_1, x_2, \dots, x_n = -\infty}^{\infty}
a_1^{\frac{x_1^2 + x_1}{2}} b_1^{\frac{x_1^2 - x_1}{2}} \cdots 
a_n^{\frac{x_n^2 + x_n}{2}} b_n^{\frac{x_n^2 - x_n}{2}} \\
&= \sum_{x_1, x_2, \dots, x_n = -\infty}^{\infty}
(a_1 b_1)^{\frac{x_1^2}{2}} \cdots (a_n b_n)^{\frac{x_n^2}{2}}
\left( \frac{a_1}{b_1} \right)^{\frac{x_1}{2}} \cdots 
= \sum_{x_1, x_2, \dots, x_n = -\infty}^{\infty}
q^{m_1 x_1^2 + \cdots + m_n x_n^2} \cdots
\end{align*}
The linear terms in \( x_i \) are omitted here.

Let \( X = (x_1, x_2, \dots, x_n)^\mathrm{T} \). Then the quadratic form appearing in $S$ can be written as
\[
m_1 x_1^2 + \cdots + m_n x_n^2 = X^\mathrm{T} A X,
\]
where
\[
A = \begin{pmatrix}
m_1 & 0 & 0 & \cdots & 0 \\
0 & m_2 & 0 & \cdots & 0 \\
\vdots & \vdots & \vdots & \ddots & \vdots \\
0 & 0 & 0 & \cdots & m_n
\end{pmatrix}
\]
is a positive definite diagonal matrix.

For \(X^\mathrm{T} A X\), we consider an ECS associated with a nonsingular integer matrix \( B \), and replace \( X \) by \( B Y + R_i \) for \( i = 1, \dots, k \). Then the quadratic term in $X^\mathrm {T}AX$ becomes $Y^\mathrm {T}B^\mathrm {T}ABY$. Motivated by this, we may find a matrix \( B \) such that \( B^\mathrm{T} A B \) is diagonal. In such cases, the transformed expression remains a product of theta functions. This leads to a general result on products of theta functions: Theorem 1.4 in \cite{CAOECS}, which shows that a product of \( n \) theta functions can be expressed as a linear combination of \( k \) terms, each of which is a product of \( n \) theta functions.

In particular, if \( A \) is a scalar matrix, then \( B^\mathrm{T} B \) is diagonal. In this context, we define such a matrix \( B \) as a generalized orthogonal matrix (with respect to \( A \)).

However, this approach is not limited to products of theta functions in which the quadratic terms appear as sums of squares; it can also be extended to general quadratic forms.

An \( n \)-ary quadratic form (QF) over \( \mathbb{Z} \) (also called an integral quadratic form) is a homogeneous polynomial of degree 2 in \( n \) variables with integer coefficients:
\[
f(x_1,\ldots,x_n) = \sum_{i, j=1}^{n} a_{ij} x_i x_j, \quad a_{ij} = a_{ji} \in \mathbb{Z}.
\]
This can be expressed in matrix form as
\[
f(x_1,\ldots,x_n) = X^{\mathrm{T}} A X,
\]
where \( X = (x_1, x_2, \ldots, x_n)^{\mathrm{T}} \), and \( A \) is a symmetric matrix, called the {associated matrix} of the quadratic form.

A quadratic form \( f(x_1,\ldots,x_n) \) is said to be {primitive} if \( \gcd(a_{ij}) = 1 \). It is called {positive definite} if \( f(X) > 0 \) for all \( X \neq \mathbf{0} \).

Two \( n \)-ary quadratic forms \( f = X^{\mathrm{T}} A X \) and \( g = X^{\mathrm{T}} C X \) are said to be {equivalent}, denoted \( f \sim g \), if there exists an invertible integral linear change of variables that transforms one into the other. That is, there exists an integer matrix \( B \) with \( \det B = \pm 1 \) such that
\[
B^{\mathrm{T}} A B = C.
\]

A quadratic form \(\ds  f(x_1,\ldots,x_n) = \sum_{i, j=1}^{n} a_{ij} x_i x_j \) is said to {represent} an integer \( m \) if there exist integers \( x_1, \ldots, x_n \) such that \( f(x_1,\ldots,x_n) = m \).

For an introduction to integral quadratic forms, the reader is referred to \cite{buell}.

For a polynomial in \( n \) variables of degree 2 with integer coefficients, written in the form \( f(X) \) for \( X \in \mathbb{Z}^n \), where the quadratic terms are represented by matrix \( A \). After applying the transformations \( X = B Y + R_i \), for \( i = 1, \ldots, k \), our goal is to eliminate the coefficients of all cross terms \( y_i y_j \) with \( i \neq j \) in the transformed expression. That is, we seek a matrix \( B \) such that $B^{\mathrm{T}} A B$
is diagonal. In this context, matrix \( A \) is not necessarily diagonal.

For the infinite sum
\[
\sum^{\infty}_{x_1, x_2=-\infty} q^{a x_1^2 + b x_1 x_2 + c x_2^2},
\]
where $a$, $b$, and $c$ are positive integers, the quadratic form $a x_1^2 + b x_1 x_2 + c x_2^2$ can be expressed as $X^{\mathrm{T}} A X$, where
\[
A = \begin{pmatrix}
a & b/2 \\
b/2 & c
\end{pmatrix}
\]
is not diagonal.

To diagonalize $A$, we choose an integral matrix $B$ such that $B^{\mathrm{T}} A B$ is diagonal. This requirement is equivalent to satisfying the condition
\[
2a b_{11} b_{12} + b (b_{11} b_{22} + b_{12} b_{21}) + 2c b_{21} b_{22} = 0,
\]
where $B = (b_{ij})$ is a $2 \times 2$ matrix.

In \cite{CAOECS}, we examined the following infinite sum:
\begin{align} \label{case111}
\sum^{\infty}_{x_1, x_2 = -\infty} q^{x_1^2 + x_1 x_2 + x_2^2}.
\end{align}

In this situation, as discussed in M.~D.~Hirschhorn's paper \cite{HIRSCH1987} on the proof of Jacobi's Four Square Theorem, it is a common practice to split the infinite sum into two parts:
\[
\sum_{x_1 + x_2 \text{ even}} q^{x_1^2 + x_1 x_2 + x_2^2}
+
\sum_{x_1 + x_2 \text{ odd}} q^{x_1^2 + x_1 x_2 + x_2^2}.
\]
In the first sum, we set
\[
y_1 = \frac{x_1 - x_2}{2}, \quad y_2 = \frac{x_1 + x_2}{2},
\]
and in the second sum, we set
\[
y_1 = \frac{x_1 - x_2 - 1}{2}, \quad y_2 = \frac{x_1 + x_2 - 1}{2}.
\]
This approach is equivalent to changing variables from $(x_1, x_2)$ to $(y_1, y_2)$ using transformations corresponding to an ECS of $\mathbb{Z}^2$ induced by the matrix
\[
B = \begin{pmatrix}
1 & 1 \\
-1 & 1
\end{pmatrix}.
\]
Here, the linear transformations from $X$ to $Y$ are given by
\[
\left\{
\begin{pmatrix}
x_1 \\
x_2
\end{pmatrix}
=
\begin{pmatrix}
1 & 1 \\
-1 & 1
\end{pmatrix}
\begin{pmatrix}
y_1 \\
y_2
\end{pmatrix}
+
\begin{pmatrix}
i \\
0
\end{pmatrix}
: i = 0, 1
\right\}.
\]
It is easy to see that the lattice $B \mathbb{Z}^2$ covers half of the points in $\mathbb{Z}^2$, while the shifted lattice $B \mathbb{Z}^2 + \begin{pmatrix} 1 \\ 0 \end{pmatrix}$ covers the remaining half.

As such, we can write \eqref{case111} as the sum of two parts
\begin{align*}
&\sum_{x_1, x_2 = -\infty}^{\infty} q^{x_1^2 + x_1 x_2 + x_2^2}\\
&= \sum_{y_1, y_2 = -\infty}^{\infty} q^{(y_1 + y_2)^2 + (y_1 + y_2)(y_2 - y_1) + (y_2 - y_1)^2} + \sum_{y_1, y_2 = -\infty}^{\infty} q^{(y_1 + y_2 + 1)^2 + (y_1 + y_2 + 1)(y_2 - y_1) + (y_2 - y_1)^2} \\
&= \sum_{y_1, y_2 = -\infty}^{\infty} q^{y_1^2 + 3 y_2^2}
+ \sum_{y_1, y_2 = -\infty}^{\infty} q^{y_1^2 + y_1 + 3 y_2^2 + 3 y_2}\\
&= \varphi(q)\varphi(q^3) + 4q \psi(q^2) \psi(q^6),
\end{align*}
which was first discovered by Ramanujan and discussed in \cite{BB1992} and \cite{spirit}.
\begin{rem}
This matrix $B$ has determinant $2$. Since an integral matrix $B$ with determinant $1$ cannot diagonalize a non-diagonal matrix $A$ (in the sense that $B^{\mathrm{T}} A B$ becomes diagonal), this particular $B$ induces the simplest nontrivial matrix ECS of $\mathbb{Z}^2$ considered in this paper.
\end{rem}

Matrix ECS has direct applications to infinite sums. Let $\ds \bigcup_{i = 1}^k \{B\mathbb{Z}^n + R_i\}$ be a matrix ECS, then
\begin{equation} \label{ECS1}
f(X) = \sum_{i = 1}^k f(BX + R_i)
\end{equation}
for any convergent infinite sum $f(X)$ over $\mathbb{Z}^n$.

We can extend the definition of equivalence of quadratic forms from the perspective of matrix ECS by removing the restriction that $\det B = \pm 1$. By replacing $X$ with $\ds \bigcup_{i=1}^k (BX + R_i)$ in $X^{\mathrm{T}} A X$, we obtain
\begin{align}
X^{\mathrm{T}} A X \sim \bigcup_{i=1}^k \left( X^{\mathrm{T}} B^{\mathrm{T}} A B X + R_i^{\mathrm{T}} (A^{\mathrm{T}} + A) B X + R_i^{\mathrm{T}} A R_i \right).
\end{align}
After this transformation, the resulting expression is no longer homogeneous in $X$. We refer to such expressions as extended quadratic forms, which include both linear terms in $X$ and constant terms.

In this paper, we focus on infinite sums of the form
\begin{align} \label{QFGF}
\sum_{X \in \mathbb{Z}^n, \Delta \in \mathbb{Z}_2^n} (-1)^{\Delta^{\mathrm{T}} X} q^{X^{\mathrm{T}} A X + BX + C},
\end{align}
where $\mathbb{Z}_2^n$ is the set of all vectors in $\mathbb{Z}^n$ whose entries are restricted to $0$ or $1$, $B$ is a $1 \times n$ integral matrix, and $C$ is an integer.

Since most product identities for theta functions involve products of two theta functions, we primarily consider quadratic forms in two variables.

A {binary quadratic form} (BQF) is a function of the form
\[
f(x, y) = ax^2 + 2bxy + cy^2, \quad (a, b, c \in \mathbb{Z}),
\]
and is denoted by $(a, 2b, c)$. We use $2b$ instead of $b$, following Gauss’s convention in his \emph{Disquisitiones Arithmeticae} \cite{gauss}. The discriminant of $f(x, y)$ is defined by
\[
D := 4b^2 - 4ac.
\]
Gauss proved that for each value of $D$, there exist only finitely many equivalence classes of binary quadratic forms with discriminant $D$. The number of such classes is called the \emph{class number} of discriminant $D$.

For a quadratic form over $\mathbb{Z}$ of the form
\[
f(x_1, x_2) = a x_1^2 + 2b x_1 x_2 + c x_2^2,
\]
where $a$, $b$, and $c$ are integers, the associated matrix is
\[
A = \begin{pmatrix}
a & b \\
b & c
\end{pmatrix}.
\]
The determinant of $A$ is $ac - b^2 = -D/4$. In this paper, we are only interested in the case where $a > 0$ and $D < 0$, which implies that $A$ is positive definite.

The general form of infinite sums involving extended BQFs that appear in this paper is
\begin{align} \label{SUMBQF}
\sum_{x_1, x_2 = -\infty}^{\infty} (-1)^{\delta_1 x_1 + \delta_2 x_2} q^{a x_1^2 + 2b x_1 x_2 + c x_2^2 + d x_1 + e x_2 + f},
\end{align}
where $\delta_1, \delta_2 \in \mathbb{Z}_2$ and $a, b, c, d, e, f \in \mathbb{Z}$.

Throughout the paper, we assume that $a$, $b$, and $c$ are nonnegative integers satisfying $2b \leq a \leq c$. For convenience, we use the term {determinant} instead of {discriminant}.
\section {Forty Identities for the Rogers-Ramanujan Functions}
\subsection{Introduction of the Forty Identities}\

\noindent
It can be shown that Rogers-Ramanujan functions satisfy the famous Rogers-Ramanujan identities
\begin{align} \label{RRID}
G(q)=\frac{1}{(q; q^5)_\infty(q^4; q^5)_\infty}\quad \hbox{and} \quad H(q)=\frac{1}{(q^2; q^5)_\infty(q^3; q^5)_\infty}.
\end{align}
Rogers-Ramanujan identities were first proven by L.~J.~Rogers in \cite{rogers1894} and later rediscovered and proved by Ramanujan in \cite{RAM40IDS}. Ramanujan established a list of forty identities involving the Rogers-Ramanujan functions $G(q)$ and $H(q)$, but provided no proofs. The forty identities was discovered in 1975 by J. Birch in G. N. Watson's handwritten notes, archived in the Oxford University Library, and was later included in \cite{RAMLOST}. Watson remarked: ``The beauty of these formulae seems to me to be comparable with that of the Rogers-Ramanujan identities. So far as I know, nobody else has discovered any formulae which approach them even remotely...''.

A lot of effort has been devoted to studying the forty identities and additional identities for $G(q)$ and $H(q)$, as well as analogous identities for functions of the Rogers-Ramanujan type. Various mathematical tools have been developed to tackle them. In 1921, H. B. C. Darling \cite{DAR1921} was the first to publish a proof of one identity on the list. Soon after, Rogers \cite{ROGERS1921} proved ten identities. In 1933, Watson \cite{watson} proved eight more, two of which with overlapped with those proved by Rogers. In his Ph.D. dissertation \cite{BREPHD}, D. ~Bressoud established proofs for another fifteen identities from the list. He later published a paper \cite{BRE1977} containing proofs of general identities from \cite{BREPHD}, developed using the method originally employed by Rogers. After their contributions, nine identities from the list remained unproven. In 1989, Biagioli \cite{BIA1989} proved eight of them using modular forms. In B.~C.~Berndt \emph{et al.} \cite{BER40ID}, thirty-five of the forty identities were proven using a variety of techniques. In 2009, H.~Yesilyurt \cite{YESJNT} presented further generalizations of Rogers' method and used them to prove three more identities. Finally, in 2012, Yesilyurt \cite{YESJMAA} proved three identities using the extension of Roger's method he developed in \cite{YESJNT}, including the only remaining open identity. 

For the most comprehensive reference on the forty identities, readers are encouraged to consult \cite{BER40ID} and Chapter~8 of \cite[pp.~217--335]{lost3}, the latter of which includes Yesilyurt’s contributions made after the publication of \cite{BER40ID}.

As Birch commented, ``all the functions involved in the identities are essentially theta functions $\cdots$''. From Jacobi's triple product identity and the definition of theta functions, both $G(q)$ and $H(q)$ can be expressed as the quotient of two theta functions:
\[G(q) = \frac{f(-q^2, -q^3)}{f(-q)}\,\, \text{and}\,\, H(q) = \frac{f(-q, -q^4)}{f(-q)}.\]
This representation allows us to rewrite all the identities proven in this paper to be rewritten  in terms of product of theta functions.

In \cite{BER40ID}, since some identities are interconnected, the authors treated the list as 35 entries rather than 40. For convenience, as it is the most comprehensive paper on this topic, we follow the entry numbering used therein. For example, we label the first part of Entry 3.16 in \cite{BER40ID} as Identity 16-1 in this paper.

In this section, we provide (or outline the key information necessary for) the proofs of twenty-two identities among Ramanujan's forty identities. We must acknowledge that we have greatly benefited from the works \cite{BER40ID}, \cite{BREPHD}, \cite{YESJNT}, and \cite{YESJMAA}. Some of the proofs presented here are direct adaptations from these references, reformulated in the language of quadratic forms and ECS.

We classify identities addressed in this section into three types.

Type I: Identity 2, 5, 7, 8, 11, and 12 are special cases of the main theorem for the product of two theta functions in \cite{CAOECS}, Theorem 3.1 introduced later in this section. They correspond to the case where the quadratic form represented by $A$ contains no cross terms. We expand one side of the identity, represented as the product of two theta functions, using ECS to obtain the other side, which is a linear combination of products of two theta functions.

Type II: Identities 4, 6, 9, 10, 19-1, and 28-1 correspond to the case where the matrix $A$ for the associated quadratic form is not diagonal. In this case, we identify an appropriate quadratic form and expand it using two ECSs of $\Z^2$ to generate each side of the identity. 

Type III: Identities 13, 14, 16-1, 17, 18, 19-2, 22, 23, 24, and 30. These identities are more intricate. Their proofs either rely on auxiliary results such as the quintuple product identity or require combining various ECSs to establish the identities.

These classifications are not mutually exclusive. Identities 5, 11, and
12 can be treated as both Type I and Type II. In the sections that follow, we provide
detailed proofs for representative cases from each type.

\subsection{Type I Identities}\

First we list all the identities of Type I. \\
\textbf{Identity 2.}
\begin{align} \label{ID402}
G(q)G(q^4)+qH(q)H(q^4)&= \chi^2(q)= \frac{\varphi(q)}{f(-q^2)}.
\end{align}
\textbf{Identity 5.}
\begin{align} \label{ID405}
G(q^{16})H(q)-q^3G(q)H(q^{16})&=\frac{f^2(-q^3)}{f(-q)f(-q^9)}.
\end{align}
\textbf{Identity 7.}
\begin{align} \label{ID407}
G(q^2)G(q^3)+qH(q^2)H(q^3)&=\frac{\chi(-q^3)}{\chi(-q)}.
\end{align}
\textbf{Identity 8.}
\begin{align} \label{ID408}
G(q^6)H(q)-qG(q)H(q^6)&=\frac{\chi(-q)}{\chi(-q^3)}.
\end{align}
\textbf{Identity 11.}
\begin{align} \label{ID4011}
G(q^8)H(q^3)-qG(q^3)H(q^8)&=\frac{\chi(-q)\chi(-q^4)}{\chi(-q^3)\chi(-q^{12})}.
\end{align}
\textbf{Identity 12.}
\begin{align} \label{ID4012}
G(q)G(q^{24})+q^5H(q)H(q^{24})&=\frac{\chi(-q^3)\chi(-q^{12})}{\chi(-q)\chi(-q^4)}.
\end{align}

By Theorem \ref{SIMECS}, we have the following theorem for the product of two theta functions. It is a slightly more generalized version of Theorem 2.1 in \cite{CAOECS}.

\begin{thm} \label{ECSCASE2}
Let $ab=q^{m_1}$ and $cd=q^{m_2}$, where $m_1, m_2 \in \Z^{+}$. Let $B=(b_{ij})$ be a $2\times 2$ non-singular integer matrix such that \[m_1b_{11}b_{12}+m_2b_{21}b_{22}=0\] and \[\gcd(b_{21}, b_{22})=1.\] Let $k=|\det B|$. Then the following equality holds.
\begin{align}
&f(a, b)f(c,d) \notag \\
=&\sum^{k-1}_{i=0}{a}^{\frac{i^2+i}{2}}{b}^{\frac{i^2-i}{2}}\notag\\ 
&f(a^{\frac{b_{11}^2+b_{11}}{2}+b_{11}i}b^{\frac{b_{11}^2-b_{11}}{2}+b_{11}i}
c^{\frac{b_{21}^2+b_{21}}{2}}d^{\frac{b_{21}^2-b_{21}}{2}},
a^{\frac{b_{11}^2-b_{11}}{2}-b_{11}i}b^{\frac{b_{11}^2+b_{11}}{2}-b_{11}i}
c^{\frac{b_{21}^2-b_{21}}{2}}d^{\frac{b_{21}^2+b_{21}}{2}}) \notag \\
\label{2genthmc} 
&f(a^{\frac{b_{12}^2+b_{12}}{2}+b_{12}i}b^{\frac{b_{12}^2-b_{12}}{2}+b_{12}i}
c^{\frac{b_{22}^2+b_{22}}{2}}d^{\frac{b_{22}^2-b_{22}}{2}},
a^{\frac{b_{12}^2-b_{12}}{2}-b_{12}i}b^{\frac{b_{12}^2+b_{12}}{2}-b_{12}i}
c^{\frac{b_{22}^2-b_{22}}{2}}d^{\frac{b_{22}^2+b_{22}}{2}}).
\end{align}
\end{thm}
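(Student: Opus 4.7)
The plan is to realize the identity as an ECS-expansion of the bivariate series defining $f(a,b)\,f(c,d)$. First I would verify that the ECS machinery of Section~2 applies. The adjugate of $B$ is
\[
B^{*}=\begin{pmatrix} b_{22} & -b_{12}\\ -b_{21} & b_{11}\end{pmatrix},
\]
and its first column has entries $(b_{22},-b_{21})^{\mathrm{T}}$, whose coprimality is exactly the hypothesis $\gcd(b_{21},b_{22})=1$. Theorem~\ref{SIMECS} (with $j=1$, $n=2$) therefore certifies that $B$ is a simple covering matrix and that $\mathbb{Z}^{2}=\bigsqcup_{i=0}^{k-1}(B\mathbb{Z}^{2}+ie_{1})$. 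This is the only external input the argument requires.

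Next I would expand the left-hand side by the series definition \eqref{theta}, using the identities $a^{x_{1}(x_{1}+1)/2}b^{x_{1}(x_{1}-1)/2}=(ab)^{x_{1}^{2}/2}(a/b)^{x_{1}/2}$ and its $c,d$ analogue, and then partition the $(x_{1},x_{2})$-sum via the ECS through the substitution $(x_{1},x_{2})^{\mathrm{T}}=B(y_{1},y_{2})^{\mathrm{T}}+i\,e_{1}$. The heart of the argument is that, after this substitution, the $y_{1}y_{2}$ cross term in the $q$-exponent carries coefficient
\[
m_{1}\,b_{11}b_{12}+m_{2}\,b_{21}b_{22},
\]
which vanishes by the first hypothesis; equivalently, $B^{\mathrm{T}}\mathrm{diag}(m_{1},m_{2})B$ is diagonal. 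The summand then factors into a constant in $(y_{1},y_{2})$ (depending on $i$), a function of $(y_{1},i)$, and a function of $(y_{2},i)$, so the double sum over $(y_{1},y_{2})$ becomes a product of two single sums.

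Finally, I would identify each single sum as a Ramanujan theta function. The piece that is constant in $(y_{1},y_{2})$ evaluates to $a^{i(i+1)/2}\,b^{i(i-1)/2}$, coming from the $i^{2}/2$ and $\pm i/2$ contributions inside $x_{1}(x_{1}\pm 1)/2$. To match the $y_{1}$-sum with $f(A_{1},B_{1})=\sum_{n}A_{1}^{n(n+1)/2}B_{1}^{n(n-1)/2}$, I would, for each base $a,b,c,d$, equate the coefficient of $y_{1}^{2}/2$ in the $y_{1}$-exponent with $\log A_{1}+\log B_{1}$ and the coefficient of $y_{1}/2$ with $\log A_{1}-\log B_{1}$, and then solve for the exponents of $a,b,c,d$ in $A_{1}$ and $B_{1}$. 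Carrying this out produces the first theta function in \eqref{2genthmc}, with exponents $(b_{11}^{2}\pm b_{11})/2\pm b_{11}i$ on $a,b$ and $(b_{21}^{2}\pm b_{21})/2$ on $c,d$; the $y_{2}$-computation is entirely parallel under $b_{11}\mapsto b_{12}$ and $b_{21}\mapsto b_{22}$ (with $i$-shifts now of size $b_{12}i$), and gives the second theta function. The principal obstacle is the purely mechanical bookkeeping of these exponents; once they are organized, \eqref{2genthmc} can be read off directly, and summing over $i=0,1,\dots,k-1$ completes the proof.
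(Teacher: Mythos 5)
Your proposal is correct and follows exactly the route the paper intends: the paper itself offers no written-out proof, merely asserting the theorem ``by Theorem~\ref{SIMECS}'' as a mild generalization of Theorem~2.1 of \cite{CAOECS}, and the mechanism it has in mind is precisely your argument --- certify the simple ECS $\bigcup_{i=0}^{k-1}(B\mathbb{Z}^2+ie_1)$ from the coprimality of $(b_{22},-b_{21})^{\mathrm{T}}$, substitute $X=BY+ie_1$ into the double series, kill the $y_1y_2$ cross term via $m_1b_{11}b_{12}+m_2b_{21}b_{22}=0$, and match exponents to read off the two theta factors and the prefactor $a^{(i^2+i)/2}b^{(i^2-i)/2}$. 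Your exponent bookkeeping checks out against \eqref{2genthmc}, so the proposal supplies a complete proof of what the paper leaves as a citation.
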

For the matrix $B$, if $\gcd(b_{11}, b_{12})=1$, we have another analogous version of the theorem.

Let \[c_1=m_1{b_{11}}^2+m_2{b_{21}}^2, \,c_2=m_1{b_{12}}^2+m_2{b_{22}}^2,\] it is easy to see that for the theta functions on the right-hand
side of \eqref{2genthmc}, we have
\begin{align}
&a^{\frac{b_{11}^2+b_{11}}{2}+b_{11}i}b^{\frac{b_{11}^2-b_{11}}{2}+b_{11}i}
c^{\frac{b_{21}^2+b_{21}}{2}}d^{\frac{b_{21}^2-b_{21}}{2}}\cdot
a^{\frac{b_{11}^2-b_{11}}{2}-b_{11}i}b^{\frac{b_{11}^2+b_{11}}{2}-b_{11}i}
c^{\frac{b_{21}^2-b_{21}}{2}}d^{\frac{b_{21}^2+b_{21}}{2}}\notag \\
=&(ab)^{{b_{11}}^2}(cd)^{{b_{21}}^2}=q^{m_1{b_{11}}^2+m_2{b_{21}}^2} = q^{c_1},\notag \\
&a^{\frac{b_{12}^2+b_{12}}{2}+b_{12}i}b^{\frac{b_{12}^2-b_{12}}{2}+b_{12}i}
c^{\frac{b_{22}^2+b_{22}}{2}}d^{\frac{b_{22}^2-b_{22}}{2}}\cdot
a^{\frac{b_{12}^2-b_{12}}{2}-b_{12}i}b^{\frac{b_{12}^2+b_{12}}{2}-b_{12}i}
c^{\frac{b_{22}^2-b_{22}}{2}}d^{\frac{b_{22}^2+b_{22}}{2}})\notag \\
\label{NECB}
=&(ab)^{{b_{12}}^2}(cd)^{{b_{22}}^2}=q^{m_1{b_{12}}^2+m_2{b_{22}}^2} = q^{c_2}.\\
\notag
\end{align}
We will use \eqref{NECB}  to guide the search for matrices $B$ associated with product identities of theta functions. 

Of the forty identities, the six listed in this subsection can be proved as special cases of Theorem \ref{ECSCASE2}.

We first give a proof of Identity 7.
\begin{proof}
We rewrite \eqref{ID407} in the form of product of theta functions
\begin{align} \label{id1}
2f(-q^4, -q^6)f(-q^6, -q^9)+2qf(-q^2, -q^8)f(-q^3, -q^{12}) =f(1, q)f(-q^3, -q^3).
\end{align}
Our goal is to find an appropriate ECS to transform the right-hand side of \eqref{id1} to the left-hand side.  Note that $1\cdot q=q$ and $-q^3\cdot -q^3=q^6$, , so we have $m_1=1$ and $m_2=6$, as in Theorem \ref{ECSCASE2}. Also observe that $-q^4\cdot-q^6=-q^{2}\cdot -q^8=q^{10}, -q^6\cdot-q^9=-q^3\cdot-q^{12}=q^{15}$. Thus, by \eqref{NECB}, we obtain $c_1 = 10, c_2 = 15.$ 

We now seek an integral matrix $B$ satisfying
\begin{equation*}
B^T \left(\begin{array}{cc} 1 & 0 \\ 0 & 6 \end{array}\right) B
=\left(\begin{array}{cc} 10 & 0 \\ 0 & 15 \end{array}\right).
\end{equation*}
Equating the corresponding entries of these two matrices yields the following system of equations
\begin{equation}\label{304eqh1}
\left\{
\begin{array}{rcrcrcrcr}
b^2_{11} &+& 6b^2_{21}&=&10,\\
b^2_{12} &+& 6b^2_{22}&=&15,\\
b_{11}b_{12} &+& 6b_{21}b_{22}&=&0.\\
\end{array}
\right.
\end{equation}

 Since both $10$ and $15$ can be represented by the quadratic form $x^2 + 6y^2$, the first two equations have integer solutions. These solutions can be used to determine the entries of the matrix $B$, with the signs remaining initially undetermined. The third equation, serving as an orthogonality condition, is then used to resolve the signs of the entries of $B$. 

Since each row or column can be multiplied by $-1$ and still satisfy the system of equations, although the solution matrix $B$ is not unique, we can choose $B$ such that its determinant and as many entries as possible are positive. In this case, as well as in most others discussed in this paper, there is only one ECS determined by $B$.

We find that
\[
B = \begin{pmatrix}
2 & 3 \\
-1 & 1
\end{pmatrix}
\]
is a solution to \eqref{304eqh1}. By Theorem~\ref{ECSCASE2}, we can choose an ECS as
\[
\begin{pmatrix}
2 & 3 \\
-1 & 1
\end{pmatrix}
\mathbb{Z}^2 +
\begin{pmatrix} i \\ 0 \end{pmatrix}, \quad i = -2, \ldots, 2.
\]

Letting $a = 1$, $b = q$, and $c = d = -q^3$ in Theorem~\ref{ECSCASE2}, we obtain
\begin{align*}
f(1, q)f(-q^3, -q^3)
&= \sum_{i = -2}^{2} q^{\frac{i^2 - i}{2}} f(-q^{4 + 2i}, -q^{6 - 2i}) f(-q^{6 + 3i}, -q^{9 - 3i}) \\
&= q^3 f(-1, -q^{10}) f(-1, -q^{15}) + 2f(-q^4, -q^6)f(-q^6, -q^9) \\
&\quad + 2q f(-q^2, -q^8) f(-q^3, -q^{12}).
\end{align*}
By \eqref{f-1a}, we have $f(-1, -q^{10}) = f(-1, -q^{15}) = 0$, so \eqref{id1} follows after simplification.
\end{proof}
For the proofs of the other five identities of this type, we provide only the key information below.

{Identity 2} \eqref{ID402} is equivalent to
\[
2f(-q^2, -q^3)f(-q^8, -q^{12}) + 2qf(-q, -q^4)f(-q^4, -q^{16}) = f(1, q)f(-q^2, -q^2).
\]
The associated ECS is
\[
\begin{pmatrix}
1 & 4 \\
-1 & 1
\end{pmatrix}
\mathbb{Z}^2 +
\begin{pmatrix} i \\ 0 \end{pmatrix}, \quad i = -2, \ldots, 2.
\]
{Identity 5} \eqref{ID405} is equivalent to
\[
4f(-q, -q^4)f(-q^{32}, -q^{48}) - 4q^3f(-q^2, -q^3)f(-q^{16}, -q^{64}) = f(1, -q)f(1, -q^4).
\]
The associated ECS is
\[
\begin{pmatrix}
1 & 8 \\
-1 & 2
\end{pmatrix}
\mathbb{Z}^2 +
\begin{pmatrix} i \\ 0 \end{pmatrix}, \quad i = -4, \ldots, 5.
\]
{Identity 8} \eqref{ID408} is equivalent to
\[
2f(-q, -q^4)f(-q^{12}, -q^{18}) - 2qf(-q^2, -q^3)f(-q^6, -q^{24}) = f(1, q)f(-q, -q).
\]
The associated ECS is
\[
\begin{pmatrix}
1 & 3 \\
-1 & 2
\end{pmatrix}
\mathbb{Z}^2 +
\begin{pmatrix} i \\ 0 \end{pmatrix}, \quad i = -2, \ldots, 2.
\]
{Identity 11} \eqref{ID4011} is equivalent to
\[
4f(-q^3, -q^{12})f(-q^{16}, -q^{24}) - 4qf(-q^6, -q^9)f(-q^8, -q^{32}) = f(1, -q)f(1, -q^6).
\]
The associated ECS is
\[
\begin{pmatrix}
3 & 4 \\
-1 & 2
\end{pmatrix}
\mathbb{Z}^2 +
\begin{pmatrix} i \\ 0 \end{pmatrix}, \quad i = -4, \ldots, 5.
\]
{Identity 12} \eqref{ID4012} is equivalent to
\[
4f(-q^2, -q^3)f(-q^{48}, -q^{72}) + 4q^5f(-q, -q^4)f(-q^{24}, -q^{96}) = f(1, -q^2)f(1, -q^3).
\]
The associated ECS is
\[
\begin{pmatrix}
1 & 6 \\
-1 & 4
\end{pmatrix}
\mathbb{Z}^2 +
\begin{pmatrix} i \\ 0 \end{pmatrix}, \quad i = -4, \ldots, 5.
\]
\subsection{Type II Identities}\

The list of type II identities is given as follows.

\noindent
\textbf{Identity 4.}
\begin{align} \label{ID404}
H(q)G(q^{11})-q^2G(q)H(q^{11})=1.
,\end{align}
\textbf{Identity 6.} 
\begin{align} \label{ID406}
G(q)G(q^9)+q^2H(q)H(q^9)&=\frac{f^2(-q^3)}{f(-q)f(-q^9)}.
\end{align}
\textbf{Identity 9.} 
\begin{align} \label{ID409}
G(q^7)H(q^2)-qG(q^2)H(q^7)&=\frac{\chi(-q)}{\chi(-q^7)}.
\end{align}
\textbf{Identity 10.} 
\begin{align} \label{ID4010}
G(q)G(q^{14})+q^3H(q^2)H(q^{14})&=\frac{\chi(-q^7)}{\chi(-q)}.
\end{align}
\textbf{Identity 19-1.} 
\begin{align} \label{ID40191}
\frac{G(q)g(q^{39})+q^8H(q)H(q^{39})}{G(q^{13})H(q^3)-q^2G(q^3)H(q^{13})}
=\frac{f(-q^3)f(-q^{13})}{f(-q)f(-q^{39})}.
\end{align}
\textbf{Identity 28-1} 
\begin{align} \label{ID40281}
\frac{G(q^{17})H(q^2)-q^3G(q^2)H(q^{17})}{G,(q)G(q^{34})+q^7H(q)H(q^{34})}
&=\frac{\chi(-q)}{\chi(-q^{17})}.
\end{align}

Identity 4 is \eqref{40ID4}, one of the two most remarkable identities among the forty identities, according to Ramanujan.  
Here we provide a detailed proof of Identity 4.

\begin{proof}
In terms of theta functions, \eqref{ID404} can be rewritten as
\begin{align*}
f(-q)f(-q^{11}) = f(-q, -q^4)f(-q^{22}, -q^{33}) - q^2 f(-q^2, -q^3)f(-q^{11}, -q^{44}),
\end{align*}
or equivalently,
\begin{align} \label{40mb1}
f(-q, -q^2)f(-q^{11}, -q^{22}) = f(-q, -q^4)f(-q^{22}, -q^{33}) - q^2 f(-q^2, -q^3)f(-q^{11}, -q^{44}).
\end{align}

From the definition of Ramanujan's theta function, \eqref{40mb1} is equivalent to
\begin{align} \label{40mb2}
&\sum_{y_1, y_2 = -\infty}^{\infty} (-1)^{y_1 + y_2} q^{3y_1^2 + y_1 + 33y_2^2 + 11y_2}\\
=& \sum_{y_1, y_2 = -\infty}^{\infty} (-1)^{y_1 + y_2} q^{5y_1^2 + 3y_1 + 55y_2^2 + 11y_2} - q^2 \sum_{y_1, y_2 = -\infty}^{\infty} (-1)^{y_1 + y_2} q^{5y_1^2 + y_1 + 55y_2^2 + 33y_2}.
\end{align}

Since the left-hand side of \eqref{40mb2} is a single product of two theta functions, and the right-hand side consists of two such products, we may expect that the left-hand side can be expanded into the right-hand side via an ECS associated with an integral matrix $B$ of determinant $2$.

However, in this case,
\[
B^{\mathrm{T}} 
\begin{pmatrix}
3 & 0 \\
0 & 33
\end{pmatrix}
B = 
\begin{pmatrix}
5 & 0 \\
0 & 55
\end{pmatrix}
\] has no integral matrix solution since $\det B=5/3$.

We find that a solution matrix to the above equation is
\[
B = \frac{1}{3}
\begin{pmatrix}
2 & 11 \\
-1 & 2
\end{pmatrix}.
\]
This corresponds to the rational matrix ECS (RECS) introduced in \cite{CAOHURECS}. However, RECS is less convenient than ECS for our purposes, as it requires prior knowledge of the identity and lacks the intuitive structural clarity offered by ECS.

We observe that the quadratic form $(3, 0, 33)$ appearing on the left-hand side of \eqref{40mb2} has determinant $3 \cdot 33 = 99 = 3^2 \cdot 11$. On the other hand, the determinant of the quadratic form $(5, 0, 55)$ on the right-hand side is $5 \cdot 55 = 275 = 5^2 \cdot 11$. This observation motivates us to begin with an extended quadratic form having determinant $11$, and to find two matrices of determinants $3$ and $5$, respectively, such that this quadratic form can be expanded into the left- and right-hand sides of the identity, according to the corresponding matrix ECS.

There are only three reduced primitive forms of quadratic forms with determinant $11$: $(1,0,11)$, $(3,2,4)$, and $(3,-2,4)$. Since $(3,2,4)$ and $(3,-2,4)$ are equivalent here, we only need to consider
\begin{align} \label{ID4S}
S = \sum_{x_1,\, x_2 = -\infty}^{\infty} (-1)^{\delta_1 x_1 + \delta_2 x_2} q^{3x_1^2 + 2x_1 x_2 + 4x_2^2 + dx_1 + ex_2},
\end{align}
with $d$ and $e$ still to be determined.

In order to recover the left-hand side of \eqref{40mb2}, we must find an integral matrix $B$ satisfying
\[
B^{\mathrm{T}} 
\begin{pmatrix}
3 & 1 \\
1 & 4
\end{pmatrix}
B = 
\begin{pmatrix}
3 & 0 \\
0 & 33
\end{pmatrix}.
\]
This leads to the following system of equations:
\begin{equation} \label{ID4eqh2}
\left\{
\begin{aligned}
3b_{11}^2 + 2b_{11}b_{21} + 4b_{21}^2 &= 3, \\
3b_{12}^2 + 2b_{12}b_{22} + 4b_{22}^2 &= 33, \\
3b_{11}b_{12} + b_{11}b_{22} + b_{12}b_{21} + 4b_{21}b_{22} &= 0.
\end{aligned}
\right.
\end{equation}

From the first equation of \eqref{ID4eqh2}, applying the quadratic formula yields  
\[
b_{11} = \frac{-b_{21} \pm \sqrt{9 - 11b_{21}^2}}{3}.
\]
We require \( 9 - 11b_{21}^2 \) to be a perfect square. Clearly, the only integer solution is \( b_{21} = 0 \), which gives \( b_{11} = \pm 1 \). Without loss of generality, we assume \( b_{11} = 1 \).

Similarly, solving the second equation of \eqref{ID4eqh2}, we find \( b_{12} = -1 \) and \( b_{22} = 3 \). It can be verified that this set of solutions satisfies the third equation, known as the ``orthogonality relation.''

Let
\[
B_1 = 
\begin{pmatrix}
1 & -1 \\
0 & 3
\end{pmatrix}.
\]
It is evident that the first row of \( B_1 \) contains two odd entries, allowing us to choose \( \delta_1 = 1 \) and \( \delta_2 = 0 \) in \eqref{ID4S}. According to the ECS induced by \( B_1 \), we apply the transformation
\[
X = B_1 Y + 
\begin{pmatrix}
0 \\
i
\end{pmatrix}, \quad (i = -1, 0, 1),
\]
to rewrite \( S \) as a linear combination of products of theta functions.

The contribution of \( i = 0 \) to \( S \) in this case is
\[
\sum_{y_1,\, y_2 = -\infty}^{\infty} (-1)^{y_1 + y_2} q^{3y_1^2 + d y_1 + 33 y_2^2 + (3e - d)y_2}.
\]
By choosing \( d = 1 \) and \( e = 4 \), we obtain \( f(-q^2, -q^4)f(-q^{22}, -q^{44}) \).

Now we have determined the exact form of \( S \):
\begin{align} \label{ID4SUM}
S = \sum_{x_1,\, x_2 = -\infty}^{\infty} (-1)^{x_1} q^{3x_1^2 + 2x_1 x_2 + 4x_2^2 + x_1 + 4x_2}.
\end{align}

We can verify that the contribution for \( i = -1 \) is the same as that for \( i = 0 \). For \( i = 1 \), the contribution to \( S \) is
\[
q^8 \sum_{y_1,\, y_2 = -\infty}^{\infty} (-1)^{y_1 + y_2} q^{3y_1^2 + 3y_1 + 33y_2^2 + 33y_2}.
\]
This equals \( q^8 f(-1, -q^6)f(-1, -q^{66}) \), which is zero by \eqref{f-1a}.

Summing the contributions for each value of \( i \), we obtain
\begin{align} \label{eq1}
S = 2f(-q^2, -q^4)f(-q^{11}, -q^{22}) = 2f(-q^2)f(-q^{22}).
\end{align}

Similarly, we consider the matrix equation
\[
B^{\mathrm{T}} 
\begin{pmatrix}
3 & 1 \\
1 & 4
\end{pmatrix}
B =
\begin{pmatrix}
5 & 0 \\
0 & 55
\end{pmatrix},
\]
which corresponds to the system of equations:
\begin{equation*}
\left\{
\begin{aligned}
3b_{11}^2 + 2b_{11}b_{21} + 4b_{21}^2 &= 5, \\
3b_{12}^2 + 2b_{12}b_{22} + 4b_{22}^2 &= 55, \\
3b_{11}b_{12} + b_{11}b_{22} + b_{12}b_{21} + 4b_{21}b_{22} &= 0.
\end{aligned}
\right.
\end{equation*}

A solution is given by
\[
B_2 = 
\begin{pmatrix}
1 & 3 \\
-1 & 2
\end{pmatrix},
\]
and we omit the details here.

We expand \eqref{ID4SUM} using the ECS corresponding to \( B_2 \), letting
\[
X = B_2 Y + 
\begin{pmatrix}
i \\
0
\end{pmatrix}, \quad i = -2, \ldots, 2,
\]
in \( S \), which yields
\begin{align} \label{eq2}
S = 2f(-q^2, -q^8)f(-q^{44}, -q^{66}) + 2q^4 f(-q^4, -q^6)f(-q^{22}, -q^{88}).
\end{align}
By \eqref{eq1} and \eqref{eq2}, we obtain \eqref{40mb1} after replacing \( q^2 \) with \( q \), which is an equivalent form of Identity 4.
\end{proof}

\begin{rem}
The quadratic form \( (3,2,4) \) is unique in that it represents all of 3, 33, 5, and 55. A remarkable symmetry is observed in the proof of Identity 4: here, \( \det B_1 = 3 \) and \( \det B_2 = 5 \), indicating that the left-hand side of the identity consists of the sum of three terms, while the right-hand side consists of five. The identity thus takes the form
\[
2A + 0 = 2B + 2C + 0,
\]
which simplifies to
\[
A = B + C.
\]

This symmetry likely explains why Ramanujan described it as “the simplest of a large class.” Based on the author’s calculations, \( (3,2,4) \) is the only quadratic form that can be diagonalized by two matrices with determinants 3 and 5 simultaneously. Therefore, Identity 4 appears to be the only identity that follows the pattern \( 2A + 0 = 2B + 2C + 0 \).

Although Ramanujan’s original proof of Identity 4 remains unknown, the author believes that the proof presented here is the simplest and most natural possible. Moreover, infinitely many analogous identities can be constructed using the same approach.

From the proof, it is evident that, unlike the case where the matrix \( A \) representing the quadratic form is diagonal, formulating a general theorem for the product of \( n \) theta functions—similar to Theorem 1.4 in \cite{CAOECS}—in the non-diagonal case is highly impractical. In fact, such a general theorem is unnecessary.

Rather than assuming the identities in advance, we may begin with positive definite extended quadratic forms of small determinants and identify all favorable cases as the determinant increases. This process is closely related to the representation of integers by quadratic forms. The coefficients of the linear terms \( x_i \) are chosen to produce the desired theta functions appearing in the identity.
\end{rem}

The details of the proofs for the following identities are omitted.

{Identity 6} \eqref{ID406} is equivalent to
\[
f(-q^{2}, -q^{3})f(-q^{18}, -q^{27}) + q^2 f(-q, -q^{4})f(-q^{9}, -q^{36}) = f^2(-q^3).
\]
The corresponding infinite sum involving an extended quadratic form is
\[
S = \sum_{x_1,\, x_2 = -\infty}^{\infty} (-1)^{x_1} q^{2x_1^2 + 2x_1 x_2 + 5x_2^2 + x_1 + 2x_2},
\]
with associated matrices
\[
B_1 = 
\begin{pmatrix}
1 & 1 \\
-2 & 1
\end{pmatrix}, \quad
B_2 = 
\begin{pmatrix}
0 & -5 \\
1 & 1
\end{pmatrix}.
\]
{Identity 9} \eqref{ID409} is equivalent to
\[
f(-q^{2}, -q^{8})f(-q^{14}, -q^{21}) - qf(-q^4, -q^6)f(-q^7, -q^{28}) = f(-q)f(-q^{14}).
\]
The associated sum is
\[
S = \sum_{x_1,\, x_2 = -\infty}^{\infty} (-1)^{x_1} q^{3x_1^2 + 2x_1 x_2 + 5x_2^2 + x_1 + 4x_2},
\]
with
\[
B_1 = 
\begin{pmatrix}
1 & -1 \\
0 & 3
\end{pmatrix}, \quad
B_2 = 
\begin{pmatrix}
1 & -3 \\
1 & 2
\end{pmatrix}.
\]
{Identity 10} \eqref{ID4010} is equivalent to
\[
f(-q^{2}, -q^{3})f(-q^{28}, -q^{42}) + q^3 f(-q, -q^4)f(-q^{14}, -q^{56}) = f(-q^2)f(-q^7).
\]
The corresponding sum is
\[
S = \sum_{x_1,\, x_2 = -\infty}^{\infty} (-1)^{x_2} q^{3x_1^2 + 2x_1 x_2 + 5x_2^2 + 3x_1 + x_2},
\]
with matrices
\[
B_1 = 
\begin{pmatrix}
1 & 2 \\
-1 & 1
\end{pmatrix}, \quad
B_2 = 
\begin{pmatrix}
0 & -5 \\
1 & 1
\end{pmatrix}.
\]
{Identity 19-1} \eqref{ID40191} is equivalent to
\begin{align*}
&f(-q^{2}, -q^{3})f(-q^{78}, -q^{117}) + q^8 f(-q, -q^{4})f(-q^{39}, -q^{156}) \\
&= f(-q^3, -q^{12})f(-q^{26}, -q^{39}) - q^2 f(-q^3, -q^{12})f(-q^{13}, -q^{52}).
\end{align*}
The associated sum is
\[
S = \sum_{x_1,\, x_2 = -\infty}^{\infty} (-1)^{x_1} q^{5x_1^2 + 2x_1 x_2 + 8x_2^2 + x_1 + 8x_2},
\]
with matrices
\[
B_1 = 
\begin{pmatrix}
1 & -1 \\
0 & 5
\end{pmatrix}, \quad
B_2 = 
\begin{pmatrix}
1 & -3 \\
1 & 2
\end{pmatrix}.
\]
{Identity 28-1} \eqref{ID40281} is equivalent to
\begin{align*}
&f(-q^2, -q^8)f(-q^{34}, -q^{51}) - q^3 f(-q^4, -q^6)f(-q^{17}, -q^{68}) \\
&= f(-q^2, -q^3)f(-q^{68}, -q^{102}) + q^7 f(-q, -q^4)f(-q^{34}, -q^{136}).
\end{align*}
The corresponding sum is
\[
S = \sum_{x_1,\, x_2 = -\infty}^{\infty} (-1)^{x_1} q^{5x_1^2 + 2x_1 x_2 + 7x_2^2 + x_1 + 7x_2},
\]
with
\[
B_1 = 
\begin{pmatrix}
1 & 3 \\
-1 & 2
\end{pmatrix}, \quad
B_2 = 
\begin{pmatrix}
1 & -1 \\
0 & 5
\end{pmatrix}.
\]

\begin{rem}
Identities 5, 11, and 12 can be classified as both Type I and Type II. For a positive integer \( \alpha \), we have
\[
f(1, -q^\alpha) = 2f(-q^\alpha, -q^{3\alpha}) = 2\psi(-q^\alpha),
\]
so the classification depends on which form appears on the right-hand side of the identity. We include them in Type I, as its structure is simpler than that of Type II.
\end{rem}

If treated as Type II, {Identity 5} \eqref{ID405} is equivalent to
\[
f(-q, -q^4)f(-q^{32}, -q^{48}) - q^3 f(-q^2, -q^3)f(-q^{16}, -q^{64}) = f(-q, -q^3)f(-q^4, -q^{12}).
\]
Associated sum is
\[
S = \sum_{x_1,\, x_2 = -\infty}^{\infty} (-1)^{x_1 + x_2} q^{4x_1^2 + 4x_1 x_2 + 5x_2^2 + 2x_1 - 3x_2},
\]
with matrices
\[
B_1 = 
\begin{pmatrix}
1 & -1 \\
0 & 2
\end{pmatrix}, \quad
B_2 = 
\begin{pmatrix}
0 & -5 \\
1 & 2
\end{pmatrix}.
\]
{Identity 11} \eqref{ID4011} is equivalent to
\[
f(-q^3, -q^{12})f(-q^{16}, -q^{24}) - q f(-q^6, -q^9)f(-q^8, -q^{32}) = f(-q, -q^3)f(-q^6, -q^{18}).
\]
Associated sum is
\[
S = \sum_{x_1,\, x_2 = -\infty}^{\infty} (-1)^{x_1} q^{4x_1^2 + 4x_1 x_2 + 7x_2^2 + 2x_1 + 7x_2},
\]
with matrices
\[
B_1 = 
\begin{pmatrix}
1 & -1 \\
0 & 2
\end{pmatrix}, \quad
B_2 = 
\begin{pmatrix}
1 & -3 \\
1 & 2
\end{pmatrix} \quad \text{or} \quad
\begin{pmatrix}
2 & 1 \\
-1 & 2
\end{pmatrix}.
\]
This case is unusual since two different matrix ECSs correspond to \( B_2 \). Both expansions yield the same infinite sum.

{Identity 12} \eqref{ID4012} is equivalent to
\[
f(-q^2, -q^3)f(-q^{48}, -q^{72}) + q^5 f(-q, -q^4)f(-q^{24}, -q^{96}) = f(-q^2, -q^6)f(-q^3, -q^9).
\]
Associated sum is
\[
S = \sum_{x_1,\, x_2 = -\infty}^{\infty} (-1)^{x_1} q^{5x_1^2 + 2x_1 x_2 + 5x_2^2 + x_1 + 5x_2},
\]
with
\[
B_1 = 
\begin{pmatrix}
1 & 1 \\
-1 & 1
\end{pmatrix}, \quad
B_2 = 
\begin{pmatrix}
1 & -1 \\
0 & 5
\end{pmatrix}.
\]
\subsection{Type III Identities}\

The list of ten Type III identities is given below.

\noindent
\textbf{Identity 13.} 
\begin{align} \label{ID4013}
G(q^9)H(q^4) - qG(q^4)H(q^9) &= \chi(-q)\chi(-q^6)\chi(-q^3)\chi(-q^{18}). \
\end{align}
\textbf{Identity 14.} 
\begin{align} \label{ID4014}
G(q^{36})H(q) - q^7 G(q)H(q^{36}) &= \chi(-q^6)\chi(-q^9)\chi(-q^2)\chi(-q^3). 
\end{align}
\textbf{Identity 16-1.}
\begin{align} \label{ID40161}
G(q^2)G(q^{13})+q^3H(q^2)H(q^{13})&=G(q^{26})H(q)-q^5G(q)H(q^{26}).
\end{align}
\textbf{Identity 17.} 
\begin{align} \label{ID4017}
&G(q)G(q^{19})+q^4H(q)H(q^{19}) \notag \\
&=\frac{1}{4\sqrt{q}}\chi^2(q^{1/2}) \chi^2(q^{19/2})-\frac{1}{4\sqrt{q}}
\chi^2(q^{-1/2})\chi^2(-q^{19/2})-\frac{q^2}{\chi^2(-q^2)\chi^2(-q^{19})}.
\end{align}
\textbf{Identity 18.} 
\begin{equation} \label{ID4018}
G(q^{31})H(q) -  q^6G(q)H(q^{31}) =  \frac{1}{2q}\chi(q)\chi(q^{31})-\frac{1}{2q}\chi(-q)\chi(-q^{31}) + \frac{q^3}{\chi(-q^2)\chi(-q^{62})}.
\end{equation}
\textbf{Identity 19-2.} 
\begin{align} \label{ID40192}
\{G(q)G(q^{39})+q^8H(q)H(q^{39})\}f(-q)f(-q^{39})
=&\frac{1}{2q}\Big\{\varphi(-q^3) \varphi(-q^{13}) - \varphi(-q) \varphi(-q^{39})\Big\}.
\end{align}
\textbf{Identity 22.}
\begin{align} \label{ID4022}
G(-q)G(-q^4)+qH(-q)H(-q^{4})&=\chi(q^2).
\end{align}

\subsection*{Identity 23.}
\begin{equation} \label{ID4023}
G(-q^2)G(-q^3) + qH(-q^2)H(-q^3) = 
\frac{\chi(q)\chi(q^6)}{\chi(q^2)\chi(q^3)}.
\end{equation}

\subsection*{Identity 24.}
\begin{equation} \label{ID4024}
G(-q^6)H(-q) - qH(-q^6)G(-q) = 
\frac{\chi(q^2)\chi(q^3)}{\chi(q)\chi(q^6)}.
\end{equation}
\textbf{Identity 30.} 
\begin{equation} \label{ID4030}
\frac{G(q^{19})H(q^4)-q^3G(q^4)H(q^{19})}{G(q^{76})H(-q)+q^{15}G(-q)H(q^{76})} = \frac{\chi(-q^2)}{\chi(-q^{38})}.
\end{equation}

We first provide a detailed proof of Identity 16-1 \eqref{ID40161}.
\begin{proof}
In terms of theta functions, this identity is equivalent to
\begin{align} 
&\frac{f(-q^{4}, -q^{6})f(-q^{26}, -q^{39}) + q^3 f(-q^{2}, -q^{8})f(-q^{13}, -q^{52})}{(q^2; q^2)_\infty (q^{13}; q^{13})_\infty} \notag \\ \label{ID161eq}
=\, &\frac{f(-q, -q^{4})f(-q^{52}, -q^{78}) - q^4 f(-q^{2}, -q^{3})f(-q^{26}, -q^{104})}{(q; q)_\infty (q^{26}; q^{26})_\infty}.
\end{align}

Since \( 10 \cdot 65 = 5 \cdot 130 = 650 = 26 \cdot 5^2 \), we may first consider quadratic forms with determinant 26.

However, we cannot find a single quadratic form that generates the theta functions appearing on both the left- and right-hand sides of \eqref{ID161eq} via ECS. The reason lies in the list of reduced primitive quadratic forms of determinant 26:
\[
(1,0,26),\ (2,0,13),\ (3,2,9),\ (3,-2,9),\ (5,4,6),\ (5,-4,6).
\]
Among these, only two need to be considered here: \( (3,2,9) \) and \( (5,4,6) \).

For \( (3,2,9) \), the associated matrix is
\[
A = \begin{pmatrix}
3 & 1 \\
1 & 9
\end{pmatrix}.
\]
Using this matrix, we find that the equation
\[
B^{\mathrm{T}} A B = \begin{pmatrix}
10 & 0 \\
0 & 65
\end{pmatrix}
\]
has a solution. However, the equation
\[
B^{\mathrm{T}} A B = \begin{pmatrix}
5 & 0 \\
0 & 130
\end{pmatrix}
\]
has no solution. Therefore, this quadratic form cannot generate both sides of the identity.

In contrast, consider the quadratic form associated with
\[
A = \begin{pmatrix}
5 & 2 \\
2 & 6
\end{pmatrix}.
\]
In this case, the equation
\[
B^{\mathrm{T}} A B = \begin{pmatrix}
10 & 0 \\
0 & 65
\end{pmatrix}
\]
has no solution, while
\[
B^{\mathrm{T}} A B = \begin{pmatrix}
5 & 0 \\
0 & 130
\end{pmatrix}
\]
does. Hence, we again cannot find a quadratic form of determinant 26 that accommodates both sides of the identity.

We now consider variants of the theta functions appearing in \eqref{ID161eq} and examine quadratic forms with determinant 39, which is \( \frac{3}{2} \cdot 26 \). Among all the binary quadratic forms of this determinant, only one primitive form meets our requirements: \( (5,2,8) \).

For the sum
\[
S = \sum_{x_1, x_2 \in \mathbb{Z}} (-1)^{\delta_1 x_1 + \delta_2 x_2} q^{5x_1^2 + 2x_1 x_2 + 8x_2^2 + d x_1 + e x_2},
\]
the associated matrix is
\[
A = \begin{pmatrix}
5 & 1 \\
1 & 8
\end{pmatrix}.
\]
It is easy to verify that both
\[
B^{\mathrm{T}} A B =
\begin{pmatrix}
15 & 0 \\
0 & 65
\end{pmatrix}
\quad \text{and} \quad
B^{\mathrm{T}} A B =
\begin{pmatrix}
5 & 0 \\
0 & 195
\end{pmatrix}
\]
have solutions. The corresponding matrices are
\[
B_1 = 
\begin{pmatrix}
1 & -1 \\
0 & 5
\end{pmatrix}, \qquad
B_2 = 
\begin{pmatrix}
1 & -3 \\
1 & 2
\end{pmatrix}.
\]
From the structure of \( B_1 \) and \( B_2 \), we choose \( \delta_1 = 1 \) and \( \delta_2 = 0 \).

Beginning with the ECS associated with \( B_1 \), the contribution of the case \( i = 0 \) to \( S \) is
\[
\sum_{y_1, y_2 \in \mathbb{Z}} (-1)^{y_1 + y_2} q^{5y_1^2 + 195y_2^2 + d y_1 + (-d + 5e) y_2}.
\]
To match the required theta functions, we set \( d = 1 \) and \( e = -5 \).

Adding up the contributions from \( i = -2, \ldots, 2 \) to \( S \), we obtain
\begin{align} \label{ID161S1}
S =\,& f(-q^4, -q^6) \big\{ f(-q^{13 \cdot 13}, -q^{13 \cdot 17}) + q^{13} f(-q^{13 \cdot 7}, -q^{13 \cdot 23}) \big\} \notag \\
& + q^3 f(-q^2, -q^8) \big\{ f(-q^{13 \cdot 11}, -q^{13 \cdot 19}) + q^{13 \cdot 3} f(-q^{13 \cdot 1}, -q^{13 \cdot 29}) \big\}.
\end{align}

By the quintuple product identity \eqref{QPI}, we simplify \eqref{ID161S1} to
\begin{align} \label{ID161S2}
S =\,& f(-q^4, -q^6) \cdot \frac{f(-q^{26}, -q^{104}) f(-q^{130})}{f(-q^{13}, -q^{117})} \notag \\
& + q^3 f(-q^2, -q^8) \cdot \frac{f(-q^{52}, -q^{78}) f(-q^{130})}{f(-q^{39}, -q^{91})}.
\end{align}

We record two identities from \cite{YESJMAA}, which can be verified easily.
\begin{equation} \label{JTPCORO}
\frac{f(-q^2, -q^8)}{f(-q, -q^9)} = \frac{f(-q^2, -q^3)}{\chi(-q) f(-q^{10})}, \qquad
\frac{f(-q^4, -q^6)}{f(-q^3, -q^7)} = \frac{f(-q, -q^4)}{\chi(-q) f(-q^{10})}.
\end{equation}
These identities establish connections between \( G(q^2) \) and \( H(q) \), as well as \( H(q^2) \) and \( G(q) \).

Applying \eqref{JTPCORO} to \eqref{ID161S2}, we obtain
\begin{align}
S =\,& \frac{f(-q^4, -q^6) f(-q^{26}, -q^{39}) + q^3 f(-q^2, -q^8) f(-q^{13}, -q^{52})}{\chi(-q^{13})} \notag \\
\label{ID3161}
=\,& \frac{ \left\{ G(q^2) G(q^{13}) + q^3 H(q^2) H(q^{13}) \right\} f(-q^2) f(-q^{13}) }{\chi(-q^{13})}.
\end{align}

On the other hand, expanding \( S \) using the ECS associated with \( B_2 \), we have
\begin{align}
S =\,& \frac{f(-q, -q^4) f(-q^{52}, -q^{78}) - q^5 f(-q^2, -q^3) f(-q^{26}, -q^{104})}{\chi(-q)} \notag \\
\label{ID3162}
=\,& \frac{ \left( H(q) G(q^{26}) - q^5 G(q) H(q^{26}) \right) f(-q) f(-q^{26}) }{\chi(-q)}.
\end{align}

We complete the proof of Identity 16-1 by equating the two expressions \eqref{ID3161} and \eqref{ID3162} and performing appropriate cancellations.
\end{proof}

\begin{rem}
Identity 16-1 is essentially of Type II, but it is more complicated. To prove it, we first “zoom out” the identity and then “zoom in” using the quintuple product identity and \eqref{JTPCORO}. The proofs of Identity 13 and Identity 30, presented later in this section, follow the same strategy.
\end{rem}

Identity 13 can be rewritten as
\begin{align} \label{3131}
&f(-q^4, -q^{16})f(-q^{18}, -q^{27}) - q f(-q^8, -q^{12})f(-q^9, -q^{36}) \notag \\
=&\,(q^4; q^4)_\infty (q^9; q^9)_\infty \cdot \frac{\chi(-q)\chi(-q^6)}{\chi(-q^3)\chi(-q^{18})}.
\end{align}
Similarly, Identity 14 can be rewritten as
\begin{align} \label{3141}
&f(-q, -q^4)f(-q^{72}, -q^{108}) - q^7 f(-q^2, -q^3)f(-q^{36}, -q^{144}) \notag \\
=&\,(q; q)_\infty (q^{36}; q^{36})_\infty \cdot \frac{\chi(-q^6)\chi(-q^9)}{\chi(-q^2)\chi(-q^3)}.
\end{align}
Since \( 20 \cdot 45 = 900 = 36 \cdot 5^2 \), we may begin by considering quadratic forms with determinant 36. We expand the sum
\[
S = \sum_{x_1, x_2 \in \mathbb{Z}} (-1)^{x_1 + x_2} q^{5x_1^2 + 4x_1 x_2 + 8x_2^2 + 3x_1 - 6x_2}
\]
using the ECS associated with the matrices
\[
B_1 = \begin{pmatrix}
2 & 1 \\
-1 & 2
\end{pmatrix}, \quad
B_2 = \begin{pmatrix}
1 & -2 \\
0 & 5
\end{pmatrix}, \quad
B_3 = \begin{pmatrix}
1 & 2 \\
-1 & 1
\end{pmatrix}.
\]

These expansions yield
\begin{align}
&f(-q^4, -q^{16})f(-q^{18}, -q^{27}) - q f(-q^8, -q^{12})f(-q^9, -q^{36}) \notag \\
\label{1314}
=&\, f(-q, -q^4)f(-q^{72}, -q^{108}) - q^7 f(-q^2, -q^3)f(-q^{36}, -q^{144}) \\
\label{13141}
=&\, \psi(q^9)\varphi(-q^{18}) - q f(q^3, q^6)f(-q^6, -q^{30}).
\end{align}

Equation \eqref{1314} shows that the left-hand sides of \eqref{3131} and \eqref{3141} are identical, as previously proved in \cite{BER40ID}. It is also shown in \cite{BER40ID} that the right-hand sides of \eqref{3131} and \eqref{3141} are equal. Hence, Identities 13 and 14 are equivalent.

We now offer an alternative proof of Identity 13. Following the approach used in Identity 16-1, we consider a quadratic form with determinant 54 instead of 36.

\begin{proof}
Let
\[
S = \sum_{x_1, x_2 \in \mathbb{Z}} (-1)^{x_1} q^{7x_1^2 + 6x_1 x_2 + 9x_2^2 + 6x_1},
\]
whose associated quadratic form is represented by the matrix
\[
A = \begin{pmatrix}
7 & 3 \\
3 & 9
\end{pmatrix}.
\]
Since
\[
B^{\mathrm{T}} A B = 
\begin{pmatrix}
10 & 0 \\
0 & 135
\end{pmatrix}
\]
has the solution
\[
B_1 = \begin{pmatrix}
1 & 3 \\
-1 & 2
\end{pmatrix},
\]
we expand \( S \) using the ECS associated with \( B_1 \) to obtain
\begin{align*}
S =\,& f(-q^4, -q^{16}) \left\{ f(-q^{117}, -q^{153}) + q^9 f(-q^{63}, -q^{207}) \right\} \notag \\
& - q f(-q^8, -q^{12}) \left\{ f(-q^{99}, -q^{171}) + q^{27} f(-q^9, -q^{261}) \right\}.
\end{align*}
Applying the quintuple product identity \eqref{QPI}, we rewrite \( S \) as
\begin{align} \label{313id1}
S =\,& f(-q^4, -q^{16}) \cdot \frac{f(-q^{18}, -q^{72}) f(-q^{90})}{f(-q^9, -q^{81})} \notag \\
& - q f(-q^8, -q^{12}) \cdot \frac{f(-q^{36}, -q^{54}) f(-q^{90})}{f(-q^{27}, -q^{63})} \notag \\
=\,& \frac{f(-q^4, -q^{16}) f(-q^{18}, -q^{27}) - q f(-q^8, -q^{12}) f(-q^9, -q^{36})}{\chi(-q^9)} \notag \\
=\,& \frac{ \left( H(q^4) G(q^9) - q G(q^4) H(q^9) \right) f(-q^4) f(-q^9) }{\chi(-q^9)},
\end{align}
where the second equality follows from \eqref{JTPCORO}.

Since
\[
B^{\mathrm{T}} A B = 
\begin{pmatrix}
9 & 0 \\
0 & 54
\end{pmatrix}
\]
has a solution
\[
B_2 = \begin{pmatrix}
0 & -3 \\
1 & 1
\end{pmatrix},
\]
we may also expand \( S \) using the ECS associated with \( B_2 \) to obtain
\begin{align} \label{313id2}
S = f(-q^{36}) \left\{ f(q^9, q^9) - q f(q^3, q^{15}) \right\} = f(-q^{36}) \cdot \frac{f(-q^2, -q^4) f(-q^6)}{f(q, q^5)},
\end{align}
where the last equality uses the quintuple product identity \eqref{QPI} with \( x = -q \), \( \lambda = -q^3 \).
\end{proof}

By \eqref{313id1} and \eqref{313id2}, we can prove Identity 13 after simplifications.

For Identity 30 \eqref{ID4030}, its first proof is given by Biagioli using modular forms. It is among the three identities proved by Yesilyurt in \cite{YESJNT} using elementary method. The proof here is Yesilyurt's proof in the language of quadratic form and ECS.
\begin{proof}
We consider the quadratic form (10, 8, 13) of determinant 114, instead of a quadratic form of determinant 76. For
\[
S = \sum_{x_1, x_2 \in \Z} (-1)^{x_1 + x_2} q^{10x_1^2 + 8x_1x_2 + 13x_2^2 + 6x_1 + 10x_2},
\]
we derive the following
\begin{align}
\notag S =& f(-q^4, -q^{16})\Big\{f(-q^{19\cdot 13}, -q^{19\cdot 17}) + q^{19}f(-q^{19\cdot 7}, -q^{19\cdot 23})\Big\}\label{ID30A} \\
&-q^3f(-q^8, -q^{12})\Big\{f(-q^{19\cdot 11}, -q^{19\cdot 19})+q^{57}(-q^{19\cdot1}, -q^{19\cdot29})\Big\}\\
=& f(-q^4, -q^{16}) \frac{f(-q^{19\cdot2}, -q^{19\cdot8})f(-q^{190})}{f(-q^{19\cdot1}, -q^{19\cdot9})}-q^3f(-q^8, -q^{12}) \frac{f(-q^{19\cdot4}, -q^{19\cdot6})f(-q^{190})}{f(-q^{19\cdot3}, -q^{19\cdot7})} \label{ID30B} \\
= &\frac{f(-q^4, -q^{16})f(-q^{19\cdot 2}, -q^{19\cdot3})- q^3f(-q^8, -q^{12})f(-q^{19\cdot1}, -q^{19\cdot4})}{\chi(-q^{19})} \label{ID30C} \\
\notag =& \Big\{G(q^{19})H(q^4)-q^3G(q^4)H(q^{19})\Big\}(q^4;q^4)_{\infty}
(q^{38};q^{38})_{\infty}.
\end{align}
In the above steps, \eqref{ID30A} is obtained using the matrix ECS transformation
\[\begin{pmatrix}
x_1 \\
x_2
\end{pmatrix}
=
\begin{pmatrix}
1 & -2 \\
0 & 5
\end{pmatrix}
\begin{pmatrix}
y_1 \\
y_2
\end{pmatrix}
+
\begin{pmatrix}
0 \\
i
\end{pmatrix}, \quad i = -2, -1, 0, 1, 2.
\]
Equation \eqref{ID30B} is derived from the quintuple product identity, and \eqref{ID30C} follows from identities \eqref{JTPCORO}.

We now apply a second matrix ECS transformation
\[
\begin{pmatrix}
x_1 \\
x_2
\end{pmatrix}
=
\begin{pmatrix}
1 & 3 \\
-1 & 2
\end{pmatrix}
\begin{pmatrix}
y_1 \\
y_2
\end{pmatrix}
+
\begin{pmatrix}
i \\
0
\end{pmatrix}, \quad i = -2, -1, 0, 1, 2,
\]
to obtain the alternative expression
\[
S = G(q^{76}) H(-q) + q^{15} G(-q) H(q^{76}) \cdot (q^2; q^2)_\infty (q^{76}; q^{76})_\infty.
\]
The identity is proved by equating the two expressions for \( S \) and simplifying both sides.
\end{proof}
Identities 17, 18, and 19-2 all require the use of 2-dimensional ECS combined with 1-dimensional ECS. The proof of Identity 18 presented here follows essentially the same reasoning as the proof by D. Bressoud in his Ph.D. dissertation \cite{BREPHD}. In that proof, Bressoud employed restricted quadratic forms that involve congruence conditions. However, the ECS-based approach presented here reveals the structure more clearly.

\begin{proof}
We can rewrite Identity 18 \eqref{ID4018} as
\begin{align} \label{ID18eq}
&2q f(-q, -q^4)f(-q^{62}, -q^{93}) - 2q^7 f(-q^2, -q^3)f(-q^{31}, -q^{124}) \notag \\
=&\, \varphi(-q^2)\varphi(-q^{62}) - \varphi(-q)\varphi(-q^{31}) + 2q^4 \psi(-q)\psi(-q^{31}).
\end{align}

From the theta functions appearing in \eqref{ID18eq}, we observe that the diagonal matrices representing the quadratic forms are
\[
\begin{pmatrix} 5 & 0 \\ 0 & 155 \end{pmatrix}, \quad
\begin{pmatrix} 4 & 0 \\ 0 & 124 \end{pmatrix}, \quad
\begin{pmatrix} 2 & 0 \\ 0 & 62 \end{pmatrix}.
\]
This suggests beginning with quadratic forms of determinant 31.

Consider
\[
A(q) := \sum_{x_1, x_2 \in \mathbb{Z}} (-1)^{x_1} q^{4x_1^2 + 2x_1 x_2 + 8x_2^2}.
\]
Expanding this using the ECS corresponding to the matrix
\[
\begin{pmatrix}
1 & -1 \\
0 & 4
\end{pmatrix},
\]
we obtain
\begin{align} \label{aq0}
A(q) = \varphi(-q^4)\varphi(-q^{124}) + 2q^8 \psi(-q^2)\psi(-q^{62}).
\end{align}

Similarly, consider
\[
B(q) := \sum_{x_1, x_2 \in \mathbb{Z}} (-1)^{x_1} q^{2x_1^2 + 2x_1 x_2 + 16x_2^2}.
\]
Expanding this using the ECS corresponding to
\[
\begin{pmatrix}
1 & -1 \\
0 & 2
\end{pmatrix},
\]
we find that 
\begin{align} \label{bq0}
B(q) = \varphi(-q^2)\varphi(-q^{62}).
\end{align}

Thus, we can express the right-hand side of \eqref{ID18eq} as \( A(q) - B(q) \), provided we replace \( q \) by \( q^2 \).

By considering the parity of \( x_1 \) in \( A(q) \) and \( B(q) \), which corresponds to a 1-dimensional ECS, we obtain
\begin{align} \label{aq1}
A(q) &= \sum_{x_1, x_2 \in \mathbb{Z}} q^{16x_1^2 + 4x_1 x_2 + 8x_2^2}
- \sum_{x_1, x_2 \in \mathbb{Z}} q^{16x_1^2 + 4x_1 x_2 + 8x_2^2 + 16x_1 + 2x_2 + 4},
\end{align}
and
\begin{align} \label{bq}
B(q) &= \sum_{x_1, x_2 \in \mathbb{Z}} q^{8x_1^2 + 4x_1 x_2 + 16x_2^2}
- \sum_{x_1, x_2 \in \mathbb{Z}} q^{16x_1^2 + 4x_1 x_2 + 8x_2^2 + 8x_1 + 2x_2 + 2}.
\end{align}

Next, we interchange \( x_1 \) and \( x_2 \) in \eqref{aq1}, yielding
\begin{align} \label{aq}
A(q) = \sum_{x_1, x_2 \in \mathbb{Z}} q^{8x_1^2 + 4x_1 x_2 + 16x_2^2}
- \sum_{x_1, x_2 \in \mathbb{Z}} q^{8x_1^2 + 4x_1 x_2 + 16x_2^2 + 2x_1 + 16x_2 + 4}.
\end{align}

Taking the difference of \eqref{aq} and \eqref{bq}, we have
\begin{align} \label{aq-bq}
A(q) - B(q) &= \sum_{x_1, x_2 \in \mathbb{Z}} q^{16x_1^2 + 4x_1 x_2 + 8x_2^2 + 8x_1 + 2x_2 + 2} \notag \\
&\quad - \sum_{x_1, x_2 \in \mathbb{Z}} q^{8x_1^2 + 4x_1 x_2 + 16x_2^2 + 2x_1 + 16x_2 + 4}.
\end{align}

Define
\[
C(q) := \sum_{x_1, x_2 \in \mathbb{Z}} (-1)^{x_1 + x_2} q^{5x_1^2 + 4x_1 x_2 + 7x_2^2 + 3x_1 - 5x_2 + 2}.
\]
All three quadratic forms involved—\( (4,2,8) \), \( (2,2,16) \), and \( (5,4,7) \)—have determinant 31.

To simplify \( C(q) \), we consider the parity of \( x_1 + x_2 \) and apply the change of variables
\[
\begin{cases}
x_1 \mapsto x_1 - x_2, \quad x_2 \mapsto -x_1 - x_2 & \text{if } x_1 + x_2 \text{ is even}, \\
x_1 \mapsto x_1 - x_2 - 1, \quad x_2 \mapsto -x_1 - x_2 & \text{if } x_1 + x_2 \text{ is odd}.
\end{cases}
\]
This transformation corresponds to the ECS associated with the simplest covering matrix \( B \) discussed in Remark 1.1. However, in this case, the matrix transforms the quadratic form \( (5,4,7) \) into \( (8,4,16) \), which still contains cross terms.

Using this transformation, we split \( C(q) \) into two parts
\begin{align} \label{cq}
C(q) &= \sum_{x_1, x_2 \in \mathbb{Z}} q^{x_1^2 + 4x_1 x_2 + 16x_2^2 + 8x_1 + 2x_2 + 2} \notag \\
&\quad - \sum_{x_1, x_2 \in \mathbb{Z}} q^{8x_1^2 + 4x_1 x_2 + 16x_2^2 + 16x_1 + 2x_2 + 4}.
\end{align}

From the expressions for \( A(q) \), \( B(q) \), and \( C(q) \), we find that
\[
C(q) = A(q) - B(q).
\]

Finally, we apply the ECS transformation corresponding to the matrix
\[
\begin{pmatrix}
1 & -2 \\
0 & 5
\end{pmatrix}
\]
to expand \( C(q) \) as
\[
C(q) = 2q^2 f(-q^2, -q^8)f(-q^{124}, -q^{186}) - 2q^{14} f(-q^4, -q^6)f(-q^{62}, -q^{248}),
\]
which matches the left-hand side of \eqref{ID18eq} if we replace $q^2$ by $q$ in \( C(q) \).
\end{proof}

Next, we provide the proof of Identity 17 \eqref{ID4017}.

\begin{proof}
After simplification, it suffices to prove that
\begin{align} \label{id17}
&4q f(-q^4, -q^6)f(-q^{76}, -q^{114}) + 4q^9 f(-q^2, -q^8)f(-q^{38}, -q^{152}) \notag \\
=&\, \varphi(q)\varphi(q^{19}) - \varphi(-q)\varphi(-q^{19}) - 4q^5 \psi(q^2)\psi(q^{38}).
\end{align}

Let
\[
S = \sum_{x_1, x_2 \in \mathbb{Z}} (-1)^{x_2} q^{4x_1^2 + 2x_1 x_2 + 5x_2^2 + 4x_1 + x_2}.
\]

Using the ECS transformation
\[
\begin{pmatrix}
0 & -5 \\
1 & 1
\end{pmatrix} \mathbb{Z}^2 +
\begin{pmatrix}
i \\
0
\end{pmatrix}, \quad i = -2, -1, 0, 1, 2,
\]
we obtain
\begin{align} \label{id171}
S = 2f(-q^4, -q^6)f(-q^{76}, -q^{114}) + 2q^8 f(-q^2, -q^8)f(-q^{38}, -q^{152}).
\end{align}

Next, apply another ECS transformation
\[
\begin{pmatrix}
1 & -1 \\
0 & 4
\end{pmatrix} \mathbb{Z}^2 +
\begin{pmatrix}
0 \\
i
\end{pmatrix}, \quad i = -1, 0, 1, 2,
\]
which yields
\begin{align} \label{id172}
S = 2\psi(q^8)\varphi(q^{76}) + 2q^{18} \varphi(q^4)\psi(q^{152}) - 2q^4 \psi(q^2)\psi(q^{38}).
\end{align}

From the 2-dissection of \( \varphi(q) \), letting \( a = b = q \) in \eqref{k=2}, we have
\begin{align} \label{disphi}
\varphi(q) = \varphi(q^4) + 2q \psi(q^8).
\end{align}

Therefore,
\begin{align} \label{id173}
&\varphi(q)\varphi(q^{19}) - \varphi(-q)\varphi(-q^{19}) \notag \\
=&\, \left\{ \varphi(q^4) + 2q \psi(q^8) \right\} \left\{ \varphi(q^{76}) + 2q^{19} \psi(q^{152}) \right\} \notag \\
&\quad - \left\{ \varphi(q^4) - 2q \psi(q^8) \right\} \left\{ \varphi(q^{76}) - 2q^{19} \psi(q^{152}) \right\} \notag \\
=&\, 4q \psi(q^8)\varphi(q^{76}) + 4q^{19} \varphi(q^4)\psi(q^{152}).
\end{align}

Combining \eqref{id171}, \eqref{id172}, and \eqref{id173}, we conclude the proof of \eqref{id17}.
\end{proof}

\begin{rem}
As in the proof of Identity 18, we can show that
\[
\sum_{x_1, x_2 \in \mathbb{Z}} (-1)^{x_2} q^{2x_1^2 + 2x_1 x_2 + 10x_2^2} = \varphi(q^2)\varphi(q^{38}) - 4q^{10} \psi(q^4)\psi(q^{76}),
\]
and
\[
\sum_{x_1, x_2 \in \mathbb{Z}} (-1)^{x_1} q^{2x_1^2 + 2x_1 x_2 + 10x_2^2} = \varphi(-q^2)\varphi(-q^{38}).
\]
If we replace \( q \) by \( q^2 \) in \eqref{id17}, then the right-hand side becomes
\[
\sum_{x_1, x_2 \in \mathbb{Z}} (-1)^{x_2} q^{2x_1^2 + 2x_1 x_2 + 10x_2^2}
- \sum_{x_1, x_2 \in \mathbb{Z}} (-1)^{x_1} q^{2x_1^2 + 2x_1 x_2 + 10x_2^2}.
\]
To justify this identity, one would need to find an infinite sum involving the quadratic form \( (4, 2, 5) \) whose ECS expansion yields this difference. We omit this proof here.
\end{rem}

Identity 19-2 is the last identity of this type, and we sketch its proof here.

\begin{proof}
We can rewrite Identity 19-2 \eqref{ID40192} as
\begin{align} \label{ID192eq}
&2q f(-q^2, -q^3)f(-q^{78}, -q^{114}) + 2q^9 f(-q, -q^4)f(-q^{39}, -q^{156}) \notag\\
=&\, \varphi(-q^3) \varphi(-q^{13}) - \varphi(-q) \varphi(-q^{39}).
\end{align}

Let
\begin{align} \label{aq1920}
A(q) := \sum_{x_1, x_2 \in \mathbb{Z}} (-1)^{x_1} q^{6x_1^2 + 6x_1 x_2 + 8x_2^2},
\end{align}
and apply the ECS associated with the matrix
\[
B = \begin{pmatrix} 1 & -1 \\ 0 & 2 \end{pmatrix}
\]
to obtain
\begin{align}
A(q) = \varphi(-q^6)\varphi(-q^{26}).
\end{align}

Similarly, define
\begin{align} \label{bq1920}
B(q) := \sum_{x_1, x_2 \in \mathbb{Z}} (-1)^{x_1} q^{2x_1^2 + 2x_1 x_2 + 20x_2^2}.
\end{align}
Using the same matrix \( B \), we have
\begin{align} \label{bq192}
B(q) = \varphi(-q^2)\varphi(-q^{78}).
\end{align}
Thus, we can express the right-hand side of \eqref{ID192eq} as \( A(q) - B(q) \), provided we replace \( q \) by \( q^2 \).

Applying the matrix transformation
\[
\left(
\begin{array}{c}
x_1 \\
x_2
\end{array}
\right)
=
\begin{pmatrix}
0 & -2 \\
1 & 1
\end{pmatrix}
\left(
\begin{array}{c}
y_1 \\
y_2
\end{array}
\right)
+
\begin{pmatrix}
i \\
0
\end{pmatrix}, \quad i = 0, 1,
\]
in \eqref{aq1920} yields
\begin{align} \label{192aq1}
A(q) &= \sum_{y_1, y_2 \in \mathbb{Z}} q^{8y_1^2 + 4y_1 y_2 + 20y_2^2}
- \sum_{y_1, y_2 \in \mathbb{Z}} q^{8y_1^2 + 4y_1 y_2 + 20y_2^2 + 6y_1 - 18y_2 + 6}.
\end{align}

By considering the parity of \( x_1 \) in \eqref{bq1920}, we have
\begin{align} \label{192bq1}
B(q) &= \sum_{x_1, x_2 \in \mathbb{Z}} q^{8x_1^2 + 4x_1 x_2 + 20x_2^2}
- \sum_{x_1, x_2 \in \mathbb{Z}} q^{8x_1^2 + 4x_1 x_2 + 20x_2^2 + 8x_1 + 2x_2 + 2}.
\end{align}

Changing variables in \eqref{192aq1} from \( Y \) to \( X \) and subtracting \eqref{192bq1}, we obtain
\begin{align} \label{aq1-bq1}
A(q) - B(q) &= \sum_{x_1, x_2 \in \mathbb{Z}} q^{8x_1^2 + 4x_1 x_2 + 20x_2^2 + 8x_1 + 2x_2 + 2} \notag \\
&\quad - \sum_{x_1, x_2 \in \mathbb{Z}} q^{8x_1^2 + 4x_1 x_2 + 20x_2^2 + 6x_1 - 18x_2 + 6}.
\end{align}

On the other hand, we already have the form of the left-hand side of \eqref{ID192eq} from the proof of Entry 19-1. Let
\[
C(q) := \sum_{x_1, x_2 \in \mathbb{Z}} (-1)^{x_1} q^{5x_1^2 + 2x_1 x_2 + 8x_2^2 + x_1 + 8x_2 + 2}.
\]
We can show that
\[
C(q) = 2q^2 f(-q^4, -q^6)f(-q^{156}, -q^{234}) + 2q^{18} f(-q^2, -q^8)f(-q^{78}, -q^{312}).
\]

Now apply the matrix transformation
\[
\left(
\begin{array}{c}
x_1 \\
x_2
\end{array}
\right)
=
\begin{pmatrix}
0 & 2 \\
1 & 0
\end{pmatrix}
\left(
\begin{array}{c}
y_1 \\
y_2
\end{array}
\right)
+
\begin{pmatrix}
i \\
0
\end{pmatrix}, \quad i = -1, 0,
\]
to \( C(q) \). This gives
\begin{align} \label{cq1}
C(q) &= \sum_{y_1, y_2 \in \mathbb{Z}} q^{8y_1^2 + 4y_1 y_2 + 20y_2^2 + 8y_1 + 2y_2 + 2} - \sum_{y_1, y_2 \in \mathbb{Z}} q^{8y_1^2 + 4y_1 y_2 + 20y_2^2 + 6y_1 - 18y_2 + 6}.
\end{align}

By comparing \eqref{aq1-bq1} and \eqref{cq1}, we conclude that
\[
C(q) = A(q) - B(q).
\]
\end{proof}

This identity links three quadratic forms, each with determinant 39: \( (2, 2, 20) \), \( (6, 6, 8) \), and \( (5, 2, 8) \).

Identities 22, 23, and 24 are of the same type. We provide the proof of Identity 22 here.

\begin{proof}
We multiply both sides of Identity 22 \eqref{ID4022} by \( (-q; -q)_\infty (-q^4; -q^4)_\infty \), so that it becomes equivalent to
\begin{align*}
f(-q^2, q^3)f(-q^8, q^{12}) + q f(-q, q^4)f(-q^4, q^{16}) = f(q, q^3)f(-q^8, -q^8).
\end{align*}

Using \eqref{k=2}, \( f(a,b) = f(a^3b, ab^3) + a f(b/a, a^5b^3) \), we expand each of the four theta functions on the left-hand side. Thus, we need to prove
\begin{align} \label{322quiv}
f(q, q^3)f(-q^8, -q^8) &= \left\{ f(-q^9, -q^{11}) - q^2 f(-q, -q^{19}) \right\} \left\{ f(-q^{36}, -q^{44}) - q^8 f(-q^4, -q^{76}) \right\} \notag \\
&\quad + q \left\{ f(-q^7, -q^{13}) + q f(-q^3, -q^{17}) \right\} \left\{ f(-q^{28}, -q^{52}) + q^4 f(-q^{12}, -q^{68}) \right\}.
\end{align}
We need to find integer solutions to the system of equations
\begin{equation*}
\left\{
\begin{array}{rcl}
4b_{11}^2 + 16b_{21}^2 &=& 20, \\
4b_{12}^2 + 16b_{22}^2 &=& 80, \\
4b_{11}b_{12} + 16b_{21}b_{22} &=& 0.
\end{array}
\right.
\end{equation*}
A solution is given by
\[
B = \begin{pmatrix} 1 & -4 \\ 1 & 1 \end{pmatrix}.
\]
Using the ECS induced by this matrix, we obtain
\begin{align} \label{3221}
f(q, q^3)f(-q^8, -q^8) &= f(-q^9, -q^{11})f(-q^{36}, -q^{44}) + q f(-q^7, -q^{13})f(-q^{28}, -q^{52}) \notag \\
&\quad + q^6 f(-q^3, -q^{17})f(-q^{12}, -q^{68}) + q^3 \psi(-q^5)\psi(-q^{15}) \notag \\
&\quad + q^{10} f(-q, -q^{19})f(-q^4, -q^{76}).
\end{align}

We also consider the zero "complementary" term
\[
-q^2 f(q, q^3)f(-1, -q^{16}) = 0,
\]
and expand it using the same ECS to derive
\begin{align} \label{3222}
-q^2 f(q, q^3)f(-1, -q^{16}) &= q^2 f(-q^3, -q^{17})f(-q^{28}, -q^{52}) - q^2 f(-q, -q^{19})f(-q^{36}, -q^{44}) \notag \\
&\quad - q^3 \psi(-q^5)\psi(-q^{15}) + q^5 f(-q^7, -q^{13})f(-q^{12}, -q^{68}) \notag \\
&\quad - q^8 f(-q^9, -q^{11})f(-q^4, -q^{76}).
\end{align}

Adding \eqref{3221} and \eqref{3222} yields \eqref{322quiv}, as desired.
\end{proof}

For Identity 23 \eqref{ID4023}, we have the equivalent form
\[
f(-q^4, q^6)f(-q^6, q^9) + q f(q^2, -q^8)f(q^3, -q^{12}) = \psi(q)\varphi(-q^{12}).
\]
As in Identity 22, the left-hand side requires an expansion analogous to \eqref{322quiv}. The transformation matrix in this case is
\[
B = \begin{pmatrix} 2 & -3 \\ 1 & 1 \end{pmatrix}.
\]

For Identity 24 \eqref{ID4024}, we write it in the equivalent form
\[
f(q, -q^4)f(-q^{12}, q^{18}) - q f(-q^2, q^3)f(q^6, -q^{24}) = \psi(q^3)\varphi(-q^4).
\]
Here, the transformation matrix is
\[
B = \begin{pmatrix} 1 & -3 \\ 1 & 2 \end{pmatrix}.
\]

\section{Ternary Quadratic Forms and Product Identities for Three Theta Functions}
The idea used in the proof of the identities in Section 3 can be applied to quadratic forms with more than two variables, particularly ternary quadratic forms. In this section, we focus on the case where the matrix \( A \), representing the quadratic form, is not diagonal. Here, we highlight a few noteworthy examples.
\begin{cor}
\begin{align}
\sum_{x_1, x_2, x_3 \in \mathbb{Z}} q^{x_1^2 + x_2^2 + 2x_3^2 + x_1x_3 + x_2x_3}
&= \varphi^2(q) \varphi(q^6) + 8q^2 \psi^2(q^2) \psi(q^{12}), \label{3s1} \\
\sum_{x_1, x_2, x_3 \in \mathbb{Z}} (-1)^{x_1} q^{x_1^2 + x_2^2 + 2x_3^2 + x_1x_3 + x_2x_3}
&= \varphi(q)\varphi(-q)\varphi(-q^6). \label{3s2}
\end{align}
\end{cor}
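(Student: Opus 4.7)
The plan is to apply the matrix-ECS framework developed in Section 2 to a single ternary extended quadratic form and let one matrix $B$ do all the work for both identities simultaneously. Let $A$ denote the symmetric matrix associated with $Q(x_1,x_2,x_3) = x_1^2 + x_2^2 + 2x_3^2 + x_1x_3 + x_2x_3$, namely
\[
A = \begin{pmatrix} 1 & 0 & 1/2 \\ 0 & 1 & 1/2 \\ 1/2 & 1/2 & 2 \end{pmatrix}.
\]
The central task is to find a small-determinant integer matrix $B$ with $B^{\mathrm T} A B$ diagonal. A short search suggests
\[
B = \begin{pmatrix} 1 & 0 & -1 \\ 0 & 1 & -1 \\ 0 & 0 & 2 \end{pmatrix},
\]
for which one checks directly that $B^{\mathrm T} A B = \mathrm{diag}(1,1,6)$ and $\det B = 2$. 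The third column of the adjugate $B^*$ is $(1,1,1)^{\mathrm T}$, whose entries are coprime, so Theorem \ref{SIMECS} guarantees that $B$ is a simple covering matrix and that $\mathbb{Z}^3 = (B\mathbb{Z}^3) \sqcup (B\mathbb{Z}^3 + e_3)$ is a simple ECS of $\mathbb{Z}^3$. This choice of $B$ is the entire creative step; everything else is bookkeeping.

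I would then substitute $X = BY + i e_3$ for $i = 0, 1$ into $Q$ and sum over $Y \in \mathbb{Z}^3$. For $i = 0$ we get $x_1 = y_1 - y_3$, $x_2 = y_2 - y_3$, $x_3 = 2y_3$, and $Q$ becomes the diagonal form $y_1^2 + y_2^2 + 6y_3^2$. For $i = 1$ the same substitution with $x_3 = 2y_3 + 1$ gives, after expanding and collecting terms, $Q = y_1^2 + y_1 + y_2^2 + y_2 + 6y_3^2 + 6y_3 + 2$.

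For \eqref{3s1}, the $i = 0$ contribution factors as $\varphi^2(q)\varphi(q^6)$. For the $i = 1$ contribution, the shift by $2$ pulls out a factor of $q^2$, and the identities
\[
\sum_{n \in \mathbb{Z}} q^{n^2+n} = 2\psi(q^2), \qquad \sum_{n \in \mathbb{Z}} q^{6n^2+6n} = 2\psi(q^{12}),
\]
(both proved by the substitution $n \mapsto -n-1$ on the negative tail) give $8q^2 \psi^2(q^2)\psi(q^{12})$, as required. For \eqref{3s2} the same substitutions apply, and since $x_1 = y_1 - y_3$ in both cosets the sign factors as $(-1)^{x_1} = (-1)^{y_1}(-1)^{y_3}$. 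The $i = 0$ piece then becomes $\varphi(-q)\varphi(q)\varphi(-q^6)$. The $i = 1$ piece carries the factor $\sum_{y_1 \in \mathbb{Z}}(-1)^{y_1}q^{y_1^2+y_1}$, which vanishes under the involution $y_1 \mapsto -y_1-1$ (the exponent is invariant while the sign flips), killing the entire correction term.

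The only real obstacle is identifying the matrix $B$: one must notice that diagonalizing $A$ over $\mathbb{Q}$ naturally produces denominators of $2$ in the $x_3$-column, which is precisely what forces the splitting by parity of $x_3$ and fixes the determinant at $2$. Once this is seen, the two identities emerge in parallel from the same ECS expansion, with the sign-twisted sum being strictly simpler because the odd coset is annihilated automatically.
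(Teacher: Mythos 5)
Your proposal is correct and follows essentially the same route as the paper: the paper uses exactly the matrix $B=\begin{pmatrix}1&0&-1\\0&1&-1\\0&0&2\end{pmatrix}$ (the $m=1$ case of its family), verifies $B^{\mathrm T}AB=\mathrm{diag}(1,1,6)$, invokes Theorem \ref{SIMECS} via the coprime third column of $B^{*}$ to get the two-coset ECS, and expands both sums accordingly. Your write-up simply supplies the coset-by-coset bookkeeping that the paper leaves implicit.
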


\begin{proof}
Let
\[
S = \sum_{x_1, x_2, x_3 \in \mathbb{Z}} q^{x_1^2 + x_2^2 + 2x_3^2 + x_1x_3 + x_2x_3}.
\]
It is easy to verify that a matrix \( B \) of the form
\[
B = \begin{pmatrix}
m & 0 & -1 \\
0 & m & -1 \\
0 & 0 & 2
\end{pmatrix}
\]
always diagonalizes \( A \) via the transformation \( B^{\mathrm{T}} A B \). In the simplest case where \( m = 1 \), we have
\[
A = \frac{1}{2} \begin{pmatrix}
2 & 0 & 1 \\
0 & 2 & 1 \\
1 & 1 & 4
\end{pmatrix}, \quad
B^{\mathrm{T}} A B = \begin{pmatrix}
1 & 0 & 0 \\
0 & 1 & 0 \\
0 & 0 & 6
\end{pmatrix}.
\]
Since 
\[
B^* = \begin{pmatrix}
2 & 0 & 1 \\
0 & 2 & 1 \\
0 & 0 & 1
\end{pmatrix},
\]
we find that
\[
\bigcup_{i = 0}^{1} \left\{ B\mathbb{Z}^3 + \begin{pmatrix} 0 \\ 0 \\ i \end{pmatrix} \right\}
\]
is an ECS of \( \mathbb{Z}^3 \). Using this ECS, we expand \( S \) to obtain identity \eqref{3s1}.

We apply the same ECS to
\[
S' = \sum_{x_1, x_2, x_3 \in \mathbb{Z}} (-1)^{x_1} q^{x_1^2 + x_2^2 + 2x_3^2 + x_1x_3 + x_2x_3}.
\]
to obtain identity \eqref{3s2}.
\end{proof}

Next, we choose
\[
\Delta = \begin{pmatrix} 1 \\ 0 \\ 0 \end{pmatrix}, \quad
A = \begin{pmatrix}
2 & 1 & 1 \\
1 & 2 & 0 \\
1 & 0 & 3
\end{pmatrix}
\]
.

Let
\[
B_1 = \begin{pmatrix}
1 & -1 & -2 \\
0 & 2 & 1 \\
0 & 0 & 3
\end{pmatrix}, \quad
B_2 = \begin{pmatrix}
1 & 1 & 2 \\
1 & 0 & -2 \\
-1 & 1 & -1
\end{pmatrix},
\]
then
\[
B_1^{\mathrm{T}} A B_1 =
\begin{pmatrix}
2 & 0 & 0 \\
0 & 6 & 0 \\
0 & 0 & 21
\end{pmatrix}, \quad
B_2^{\mathrm{T}} A B_2 =
\begin{pmatrix}
7 & 0 & 0 \\
0 & 7 & 0 \\
0 & 0 & 7
\end{pmatrix}.
\]

By Theorem~\ref{SIMECS}, we verify that both transformation matrices \( B_1 \) and \( B_2 \) are simple covering matrices. Using the matrix ECS
\[
B_1 \mathbb{Z}^3 + \begin{pmatrix} 0 \\ 0 \\ i \end{pmatrix}, \quad i = -2, \dots, 3,
\]
and
\[
B_2 \mathbb{Z}^3 + \begin{pmatrix} 0 \\ 0 \\ i \end{pmatrix}, \quad i = -3, \dots, 3,
\]
by expanding the infinite sum \[
S = \sum_{x_1, x_2, x_3 \in \mathbb{Z}} (-1)^{x_1} q^{2x_1^2 + 2x_2^2 + 3x_3^2 + 2x_1x_2 + 2x_1x_3},
\]
we obtain the following identity for the product of three theta functions. 

\begin{cor}
\begin{align}
&\varphi(-q^2)\varphi(-q^6)\varphi(q^{21}) - 2q^3 \varphi(-q^2) f(-q^2, -q^{10}) f(q^7, q^{35}) \notag \\
=\, &\varphi(-q^7)^2 \varphi(q^7)
- 2q^2 f(-q^3, -q^{11}) f(-q, -q^{13}) f(q^5, q^9) \notag \\
&+ 2q^2 f(-q^5, -q^9) f(-q, -q^{13}) f(q^3, q^{11})
- 2q^2 f(-q^5, -q^9) f(-q^3, -q^{11}) f(q, q^{13}). \notag
\end{align}
\end{cor}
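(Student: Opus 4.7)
Following the template established in the proof of Identity 4, the plan is to compute the sum
\[
S = \sum_{x_1, x_2, x_3 \in \mathbb{Z}} (-1)^{x_1} q^{2x_1^2 + 2x_2^2 + 3x_3^2 + 2x_1 x_2 + 2x_1 x_3}
\]
by expanding it via each of the two ECSs supplied in the statement and then equating the two results. Under the substitution $X = B_j Y + (0,0,i)^{\mathrm{T}}$, the extended quadratic form becomes $Y^{\mathrm{T}} (B_j^{\mathrm{T}} A B_j) Y + 2(0,0,i) A B_j Y + 3 i^2$. Because $B_j^{\mathrm{T}} A B_j$ is diagonal for $j = 1, 2$, each inner sum over $Y$ factors as a product of three one-variable theta-type sums. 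The sign $(-1)^{x_1}$ translates to $(-1)^{y_1 + y_2}$ in both expansions, since the $(1,3)$-entry of $B_1$ is $-2$ and the $(1,3)$-entry of $B_2$ is $2$, both even.

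For the $B_1$-expansion, which produces the left-hand side, the diagonal part is $2 y_1^2 + 6 y_2^2 + 21 y_3^2$ and the linear shift is $2 i (y_1 - y_2 + 7 y_3)$, since $(0,0,i) A B_1 = i(1, -1, 7)$. For each $i \in \{-2, -1, 0, 1, 2, 3\}$ the sum factors as a triple product. For the odd values $i = \pm 1, 3$, a single shift of the summation index in one of the three factors reduces it to a sum of the form $f(-1, \ast)$, which vanishes by \eqref{f-1a}. The surviving contributions $i = 0$ and $i = \pm 2$ should assemble into $\varphi(-q^2) \varphi(-q^6) \varphi(q^{21})$ and two equal copies of $-q^3 \varphi(-q^2) f(-q^2, -q^{10}) f(q^7, q^{35})$, respectively.

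For the $B_2$-expansion, which produces the right-hand side, the diagonal part is $7(y_1^2 + y_2^2 + y_3^2)$ and the linear shift is $2 i (-2 y_1 + 4 y_2 - y_3)$. For each $i \in \{-3, \ldots, 3\}$ the sum again factors as a triple product, but now every one-variable factor has the form $\sum_n (\pm 1)^n q^{7 n^2 + (\text{linear in } n)}$ and must be normalised by an explicit index shift into the canonical form $f(-q^a, -q^b)$ or $f(q^a, q^b)$ with $a + b = 14$. The symmetry $i \mapsto -i$, implemented by $y_k \mapsto -y_k$, pairs nonzero contributions: $i = 0$ yields $\varphi(-q^7)^2 \varphi(q^7)$, while the three pairs $i = \pm 1, \pm 2, \pm 3$ reproduce the three remaining $\pm 2 q^2 (\cdots)$ terms on the right-hand side.

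The main obstacle is the detailed bookkeeping in the $B_2$-expansion. Each of the six nonzero values of $i$ contributes three single-variable sums, each of which demands an independent index shift to reach the standard form $f(\cdot, \cdot)$. The delicate point is that the accumulated $q$-power prefactors from all three shifts must combine with the constant contribution $q^{3 i^2}$ to give exactly $q^2$ in every paired case. The reassuring structural fact is that $B_2^{\mathrm{T}} A B_2 = 7 I$ forces a common conductor $7$ in every inner theta function, so the only freedom lies in the split $a + b = 14$; the three admissible splits $(1, 13)$, $(3, 11)$, $(5, 9)$ then account for precisely the three nonzero terms appearing on the right-hand side.
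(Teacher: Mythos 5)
Your proposal is correct and follows exactly the route the paper itself sketches for this corollary: expand the sum $S = \sum (-1)^{x_1} q^{2x_1^2 + 2x_2^2 + 3x_3^2 + 2x_1x_2 + 2x_1x_3}$ via the two ECSs induced by $B_1$ (determinant $6$, producing the left-hand side with the odd-$i$ terms vanishing by \eqref{f-1a} and $i=\pm 2$ pairing up) and $B_2$ (determinant $7$, producing the right-hand side with $i=\pm 1,\pm 2,\pm 3$ pairing into the three $\pm 2q^2$ terms), then equate. Your computed linear shifts $i(1,-1,7)$ and $i(-2,4,-1)$ and the sign translation $(-1)^{x_1}\mapsto(-1)^{y_1+y_2}$ are all accurate, and the bookkeeping you flag as the main obstacle does indeed close up to give the stated identity.
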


Next, for
\[
A = \begin{pmatrix}
2 & 0 & 1 \\
0 & 2 & 1 \\
1 & 1 & 2
\end{pmatrix}, \quad
B_1 = \begin{pmatrix}
1 & 1 & 1 \\
1 & -1 & 1 \\
0 & 0 & -2
\end{pmatrix}, \quad
B_2 = \begin{pmatrix}
0 & 2 & 1 \\
1 & 1 & -1 \\
-1 & -1 & -2
\end{pmatrix}, \quad
B_3 = \begin{pmatrix}
1 & 2 & 0 \\
1 & 0 & -2 \\
-2 & 0 & 0
\end{pmatrix}.
\]
we have
\[
B_1^{\mathrm{T}} A B_1 =
\begin{pmatrix}
4 & 0 & 0 \\
0 & 4 & 0 \\
0 & 0 & 4
\end{pmatrix}, \quad
B_2^{\mathrm{T}} A B_2 =
\begin{pmatrix}
2 & 0 & 0 \\
0 & 6 & 0 \\
0 & 0 & 12
\end{pmatrix}, \quad
B_3^{\mathrm{T}} A B_3 =
\begin{pmatrix}
4 & 0 & 0 \\
0 & 8 & 0 \\
0 & 0 & 8
\end{pmatrix}.
\]
We omit the corresponding identities in this paper.
\section{Applications and Summary}
\subsection {Other Applications and Further Discussions}\

We can generate a collection of identities by expanding infinite sums in the form of \eqref{QFGF} as the determinant of the associated quadratic form increases. Since it is impossible to exhaust all such identities, we present here a few particularly elegant analogues of Ramanujan’s forty identities.

Motivated by the proof of \eqref{40ID4} in Section~3, it is natural to seek the next simplest analogue, where the pattern involves one side expressed as the sum of three parts and the other as the sum of seven parts. Such a relation takes the form
\[
2A + 0 = 2B + 2C + 2D + 0,
\]
which simplifies to
\[
A = B + C + D.
\]

In this case, since the determinant of one of the transformation matrices is 7, the identities involve the functions \( A(q), B(q) \), and \( C(q) \), as defined by the Rogers–Selberg identities
\begin{align*}
A(q) &= \sum_{n=0}^\infty \frac{q^{2n^2}}{(q^2;q^2)_n\, (-q;q)_{2n}} = \frac{f(-q^3, -q^4)}{f(-q^2)}, \\
B(q) &= \sum_{n=0}^\infty \frac{q^{2n^2 + 2n}}{(q^2;q^2)_n\, (-q;q)_{2n}} = \frac{f(-q^2, -q^5)}{f(-q^2)}, \\
C(q) &= \sum_{n=0}^\infty \frac{q^{2n^2 + 2n}}{(q^2;q^2)_n\, (-q;q)_{2n+1}} = \frac{f(-q, -q^6)}{f(-q^2)}.
\end{align*}
These identities correspond to entries 33, 32, and 31, respectively, in Slater’s list \cite{slater}.

We present two identities that follow the aforementioned pattern.

\begin{cor}
\begin{align}
f(-q) f(-q^{5}) &= f(-q^2, -q^5) f(-q^{15}, -q^{20}) - q f(-q, -q^6) f(-q^{10}, -q^{25}) \notag \\
&\quad - q^2 f(-q^3, -q^4) f(-q^{5}, -q^{30}), \label{ID304analogue1} \\
f(-q) f(-q^{17}) &= f(-q, -q^6) f(-q^{51}, -q^{68}) - q^2 f(-q^3, -q^4) f(-q^{34}, -q^{85}) \notag \\
&\quad + q^7 f(-q^2, -q^5) f(-q^{17}, -q^{102}). \label{ID304analogue2}
\end{align}
\end{cor}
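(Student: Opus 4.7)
The proof will follow the template established for Identity 4. Each of \eqref{ID304analogue1} and \eqref{ID304analogue2} has the shape of one theta-function product on the left equated to a sum of three on the right, which is the second-simplest instance of the pattern $2A + 0 = 2B + 2C + 2D + 0$ discussed above. This pattern arises when a single extended binary quadratic form is expanded through two matrix ECSs of $\mathbb{Z}^2$ whose transformation matrices have determinants $3$ and $7$, respectively: the three residue classes from the first expansion produce one nontrivial product doubled by symmetry plus one vanishing contribution via \eqref{f-1a}, and the seven from the second produce three nontrivial products doubled by symmetry plus one vanishing contribution.

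Because $(\det B_j)^2 \det A$ must equal the product of the diagonal entries of $B_j^{\mathrm{T}} A B_j$, the left side of \eqref{ID304analogue1} (corresponding to $\mathrm{diag}(3, 15)$) and its right side (corresponding to $\mathrm{diag}(7, 35)$) together force $\det A = 5$; for \eqref{ID304analogue2} the analogous constraint forces $\det A = 17$. Among reduced primitive binary quadratic forms of these determinants, the unique form that represents both $3$ and $7$ is $(2, 2, 3)$ in the first case and $(3, 2, 6)$ in the second. Solving the two systems of the type of \eqref{ID4eqh2} yields the representative matrices
$$B_1 = \begin{pmatrix} 0 & -3 \\ 1 & 1 \end{pmatrix}, \qquad B_2 = \begin{pmatrix} 1 & -4 \\ 1 & 3 \end{pmatrix}$$
for \eqref{ID304analogue1}, and
$$B_1 = \begin{pmatrix} 1 & -1 \\ 0 & 3 \end{pmatrix}, \qquad B_2 = \begin{pmatrix} 1 & 5 \\ -1 & 2 \end{pmatrix}$$
for \eqref{ID304analogue2}. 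Theorem \ref{SIMECS} confirms in each case that both matrices are simple covering matrices, since the first column of each adjugate has coprime entries.

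The remaining step mirrors the Identity 4 proof: choose the parity parameters $\delta_1, \delta_2$ and the linear shifts $d, e$ in
$$S = \sum_{x_1, x_2 = -\infty}^{\infty} (-1)^{\delta_1 x_1 + \delta_2 x_2} q^{a x_1^2 + 2 b x_1 x_2 + c x_2^2 + d x_1 + e x_2}$$
so that the $i = 0$ contribution in the $B_1$-ECS expansion reproduces the left-hand side theta product (up to doubling from a partner residue class and an overall power of $q$), then expand $S$ through the $B_2$-ECS and compare. The main obstacle will be the bookkeeping of powers of $q$ and signs as the linear form $d x_1 + e x_2$ transforms under each substitution $X = B_j Y + i e_1$, together with checking that the extra residue class in each expansion really produces a factor of the form $f(-1, -q^{*})$ that vanishes by \eqref{f-1a}. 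Once this is in place, the identity follows after the routine replacement $q \mapsto q^2$ required to clear the half-integer exponents that appear naturally when $f(-q)$, $f(-q^5)$, and $f(-q^{17})$ are rewritten as sums over $\mathbb{Z}$.
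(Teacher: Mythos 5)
Your proposal is correct and follows essentially the same route as the paper: the paper's sketch of \eqref{ID304analogue1} uses exactly your form $(2,2,3)$ with $B_1=\bigl(\begin{smallmatrix}0&-3\\1&1\end{smallmatrix}\bigr)$ (equivalently $\bigl(\begin{smallmatrix}1&2\\-1&1\end{smallmatrix}\bigr)$) and $B_2=\bigl(\begin{smallmatrix}1&-4\\1&3\end{smallmatrix}\bigr)$, and your data for \eqref{ID304analogue2} --- the form $(3,2,6)$ of determinant $17$ with covering matrices of determinants $3$ and $7$ --- check out against the required diagonal forms $\mathrm{diag}(3,51)$ and $\mathrm{diag}(7,119)$. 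The one small slip is that for $B_1=\bigl(\begin{smallmatrix}1&-1\\0&3\end{smallmatrix}\bigr)$ it is the \emph{second} column of the adjugate whose entries are coprime, so the coset representatives there must be $i e_2$ rather than $i e_1$, exactly as in the paper's proof of Identity 4.
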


These identities can also be expressed in the equivalent forms
\begin{align*}
\chi(-q)\chi(-q^{5}) &= B(q) A(q^{5}) - q C(q) B(q) - q^2 A(q) C(q^{5}), \\
\chi(-q)\chi(-q^{17}) &= C(q) A(q^{17}) - q^2 A(q) B(q^{17}) + q^7 B(q) C(q^{17}).
\end{align*}

These are recorded as Identities (3.14) and (3.15) in Hahn’s work \cite{HAHN2003}.

We sketch the proof of \eqref{ID304analogue1} below.

\begin{proof}
Consider the infinite sum
\[
S = \sum_{x_1, x_2 \in \mathbb{Z}} (-1)^{x_2} q^{2x_1^2 + 2x_1x_2 + 3x_2^2 + 2x_1 + x_2}.
\]
Expanding \( S \) using the matrix
\[
B_1 = \begin{pmatrix} 1 & 2 \\ -1 & 1 \end{pmatrix} \quad \text{or} \quad \begin{pmatrix} 0 & -3 \\ 1 & 1 \end{pmatrix},
\]
we obtain
\[
S = 2f(-q^2, -q^4) f(-q^{10}, -q^{20}).
\]

Alternatively, expanding \( S \) using
\[
B_2 = \begin{pmatrix} 1 & -4 \\ 1 & 3 \end{pmatrix},
\]
yields
\begin{align*}
S &= 2f(-q^4, -q^{10}) f(-q^{30}, -q^{40}) - 2q^2 f(-q^2, -q^{12}) f(-q^{20}, -q^{50}) \\
&\quad - 2q^4 f(-q^6, -q^8) f(-q^{10}, -q^{60}).
\end{align*}
We recover \eqref{ID304analogue1} after simplifications.
\end{proof}
As applications, many identities analogous to Ramanujan’s forty identities can be classified as either type I or type II identities. For instance, this applies to certain modular relations for the Göllnitz–Gordon functions in \cite{huang3, huang2}, the septic analogues of the Rogers–Ramanujan functions in \cite{HAHN2003}, the nonic analogues in \cite{Baruah2008}, and the Rogers–Ramanujan-type functions of order eleven in \cite{adiga}.

In a broad sense, proving an identity amounts to demonstrating that  both sides cover the same set of points with the same multiplicities. Some identities can be explained using different ECSs after simple transformations. For example, the identity
\begin{align} \label{lastexample}
(q^3;q^3)^2_\infty = \psi(q^2)\phi(q^9) - q^2\phi(q)\psi(q^{18})
\end{align}
can be rewritten as
\[
2f^2(-q^3, -q^6) = f(1, q^2)f(q^9, q^9) - q^2 f(q, q)f(1, q^{18}),
\]
which may be obtained by expanding the infinite sum
\[
\sum_{x_1, x_2 \in \mathbb{Z}} (-1)^{x_2} q^{2x_1^2 + 2x_1x_2 + 5x_2^2}
\]
using the ECSs associated with the matrices
\[
B_1 = \begin{pmatrix}
1 & -1 \\
0 & 2
\end{pmatrix}, \quad
B_2 = \begin{pmatrix}
1 & -2 \\
1 & 1
\end{pmatrix}.
\]
In this context, identity \eqref{lastexample} can be treated as a type II identity.

On the other hand, with a simple rearrangement, \eqref{lastexample} can also be rewritten as
\[
f^2(-q^3, -q^6) = \phi(q^9)\left(\psi(q^2) - q^2\psi(q^{18})\right) - q^2 \psi(q^{18}) \left(\phi(q) - \phi(q^9)\right).
\]

Applying the 3-dissection formulas for theta functions given in \eqref{3dis}, which correspond to one-dimensional ECSs, we find that
\[
\psi(q^2) - q^2\psi(q^{18}) = f(q^6, q^{12}), \quad \phi(q) - \phi(q^9) = 2f(q^3, q^{15}).
\]
Thus, identity \eqref{lastexample} is equivalent to
\begin{align} \label{lastexample1}
f^2(-q^3, -q^6) = f(q^9, q^9) f(q^6, q^{12}) - q^2 f(1, q^{18}) f(q^3, q^{15}).
\end{align}

Now, to interpret \eqref{lastexample1} in terms of ECS, we look for a matrix \( B \) satisfying
\begin{equation} \label{eqh1}
B^{\mathrm{T}} \begin{pmatrix} 9 & 0 \\ 0 & 9 \end{pmatrix} B = \begin{pmatrix} 18 & 0 \\ 0 & 18 \end{pmatrix}.
\end{equation}
A solution to this equation is
\[
B = \begin{pmatrix} 1 & 1 \\ -1 & 1 \end{pmatrix},
\]
which is the simplest nontrivial matrix for ECS in \( \mathbb{Z}^2 \). This shows that \eqref{lastexample1} is a special case of the identity generated by the transformation induced by \( B \):

If \( ab = cd \), then
\[
f(a,b)f(c,d) = f(ad, bc)f(ac, bd) + a f\left(\frac{c}{a}, a^2bd\right) f\left(\frac{d}{a}, a^2bc\right),
\]
which appears as Entry 29 in \cite[p.~45]{BBNTBK}.

Therefore, the identity \eqref{lastexample1}, which is equivalent to \eqref{lastexample}, may also be treated as a type I identity.

\subsection{Summary}\

First established in 1853, Schröter’s formula and its generalizations allow, under certain conditions, a product of two theta functions to be expressed as a specific sum of products of two theta functions. These formulas have been instrumental in establishing many of Ramanujan’s modular equations.  The general formula known as the Blecksmith–Brillhart–Gerst Theorem, originally established in \cite{BBG1988}, was applied multiple times by the authors in \cite{BER40ID} in their proofs. As noted in \cite{CAOECS}, such identities are special cases of the main theorem developed therein.

In \cite{watson}, Watson approached the product of two theta functions by considering a change of indices in the double sum over \( m \) and \( n \) of the form
\[
\alpha m + \beta n = 5M + a, \quad \gamma m + \delta n = 5N + b,
\]
where \( \alpha, \beta, \gamma, \delta \in \mathbb{Z} \), and \( a, b \in \mathbb{Z} \) are chosen to decompose the original product into a sum of two products of theta functions. Using this method, Watson proved two of Ramanujan’s forty identities and established two additional results used in subsequent proofs.

This change of indices can be expressed as the linear transformation
\[
A \begin{pmatrix} m \\ n \end{pmatrix} = 5 \begin{pmatrix} M \\ N \end{pmatrix} + \begin{pmatrix} a \\ b \end{pmatrix},
\]
where
\[
A = \begin{pmatrix} \alpha & \beta \\ \gamma & \delta \end{pmatrix}.
\]
Since \( \det A = \pm 5 \), this is equivalent to
\[
\begin{pmatrix} m \\ n \end{pmatrix} = \pm A^* \begin{pmatrix} M \\ N \end{pmatrix} + A^{-1} \begin{pmatrix} a \\ b \end{pmatrix},
\]
where \( A^* \) denotes the adjugate of \( A \). This transformation is structurally equivalent to the ECS induced by \( A^* \). To complete the construction, one must identify values of \( a \) and \( b \) such that the vectors \( A^{-1}\begin{pmatrix} a \\ b \end{pmatrix} \) form a complete set of coset representatives.

A generalization of Rogers’ method, as presented in \cite{BER40ID}, takes the following form: Let \( p \) and \( m \) be odd positive integers with \( p > 1 \), and let \( \alpha, \beta, \lambda \in \mathbb{R} \) such that
\[
\alpha m^2 + \beta = \lambda p.
\]
Then,
\begin{align} \label{rogers40}
&\sum_{v \in S_p}
q^{p\alpha m^2 v^2} 
f\left(-q^{p\alpha + 2p\alpha mv}, -q^{p\alpha - 2p\alpha mv} \right)
q^{p\beta v^2}
f\left(-q^{p\beta + 2p\beta v}, -q^{p\beta - 2p\beta v} \right) \notag \\
=& \sum_{u \in \mathbb{Z}} \sum_{t \in \mathbb{Z}} (-1)^t q^{I(u,t)},
\end{align}
where
\[
I(u,t) = \lambda \left( u + \frac{1}{2} + \alpha m t \right)^2 + \frac{\alpha \beta}{\lambda} t^2,
\quad
S_p = \left\{ \frac{1}{2p}, \frac{3}{2p}, \dots, \frac{2p - 1}{2p} \right\}.
\]
Rogers' strategy is to identify two sets of parameters,  
\[\left\{ \alpha_1, \beta_1, m_1, p_1, \lambda_1 \right\}\,\, \hbox{and} \,\, \left\{ \alpha_2, \beta_2, m_2, p_2, \lambda_2 \right\},\]
that yield the same expression on the right-hand side of \eqref{rogers40}. This method establishes an identity between two sums, each representing a product of two theta functions. For instance, if the two sets of parameters are chosen such that   
\[
\alpha_1\beta_1 = \alpha_2\beta_2, \quad \lambda_1 = \lambda_2, \quad 
\frac{\alpha_1 m_1}{\lambda_1} \pm \frac{\alpha_2 m_2}{\lambda_2} \in \mathbb{Z},
\]
then both parameter sets will satisfy the formula for $I$. Consequently, they produce the same sum on the right-hand side of \eqref{rogers40}. 

The proof of \eqref{rogers40} in \cite{BER40ID} proceeds from left to right: under specific conditions, a sum of \( p \) products of theta functions is expressed as a single infinite sum involving a quadratic form. However, it is often more convenient to proceed in the reverse direction—expanding a single term into a sum of \( p \) terms via an ECS. The two parameter sets correspond to two distinct transformation matrices associated with the same extended quadratic form appearing in the infinite sum. The sum can then be expanded using the ECSs determined by these matrices. To express the result as a product of theta functions, the associated quadratic form must be diagonalized.

Bressoud’s main strategies for proving fifteen of Ramanujan’s identities include an extension of Rogers’ method and the use of ``restricted quadratic forms''. These are quadratic forms where variables are subject to congruence restrictions. They can be transformed into ``unrestricted quadratic forms'' by removing those constraints and extending the domain to \( \mathbb{Z}^n \).

Yesilyurt’s generalization of Rogers’ method in \cite{YESJNT} involves expressions of the form  
\[
R(\varepsilon, \delta, \ell, t, \alpha, \beta, m, p, \lambda, x, y),
\]
where \( x \) and \( y \) are variables, and the remaining nine symbols represent parameters. His framework incorporates the quotient form of the quintuple product identity, which is essential in proving several of the forty identities.

The underlying structure of many product identities involving theta functions can be understood in terms of linear transformations—naturally explained via ECS. The approaches of Watson, Rogers, Bressoud, and Yesilyurt are all elementary and fall within the broader matrix ECS framework. Though Bressoud’s and Yesilyurt’s generalizations involve more parameters and complexity, their strategy—like Rogers’—is to identify two parameter sets that yield the same infinite sum on one side of the identity.  This approach has so far been successful only for products of two theta functions.

Although Ramanujan’s original proofs remain unknown, the author believes that his insights are best explained through ECS, or a formulation equivalent to ECS. This framework offers a simple, elegant, and unified structure for understanding many of Ramanujan’s identities. Furthermore, it provides a systematic method for discovering new identities and exploring their applications in partitions.

\end{document}